\newtheorem{theorem}{Theorem}
\newtheorem{proposition}{Proposition}[section]
 \newtheorem{definition}{Definition}
\newtheorem{lemma}[proposition]{Lemma}
\newcommand\grad{{\bf \nabla}}
\newcommand\rvec{{\bf r}}
\newcommand\la{{\lambda}}
\newcommand\nvec{{\bf n}}
\newcommand\xvec{{\bf x}}
\newcommand\mvec{{\bf m}}
\newcommand\eps{{\epsilon}}
\newcommand\Qvec{{\bf Q}}
\newcommand \I {{\bf I_{LG}}}
\newcommand{\Acal}{{\cal A}}
\newcommand{\Rr}{{\mathbb R}}
\newcommand{\dd}{\, \mathrm{d}}
\DeclareMathOperator{\dist}{dist}       
\DeclareMathOperator{\loc}{loc}
\newcommand{\N}{\mathbb{N}}             
\newcommand{\R}{\mathbb{R}}             
\DeclareMathOperator{\sgn}{sgn}
\DeclareMathOperator{\tr}{tr}
\renewcommand{\S}{\mathbb{S}}
\newcommand{\vbar}[1]{\bar{\vec{#1}}}
\renewcommand{\vec}{\mathbf}    
\newcommand{\vtilde}[1]{\tilde{\vec{#1}}}
\begin{document}

\centerline{\large \bf Uniaxial versus Biaxial Character of Nematic Equilibria in Three Dimensions}
\vspace{0.6em}
\centerline{\today}

\vspace{1.5em}
\begin{center}
{Apala Majumdar\\
 Department of Mathematical Sciences,
University of Bath, Bath, BA2 7AY, United Kingdom \\}
\end{center}

\begin{center}{Adriano Pisante\\
 Dipartimento di Matematica ``G.~Castelnuovo'', Sapienza, Universit\`a di Roma,
 00185 Rome, Italy
\\} \end{center}

\begin{center} {Duvan Henao\\
 Facultad de Matem\'aticas, Pontificia Universidad Cat\'olica de Chile,
 Casilla 306, Correo 22, Santiago, Chile
\\}\end{center}

\begin{abstract}

We study global minimizers of the Landau-de Gennes (LdG) energy
functional for nematic liquid crystals, on arbitrary
three-dimensional simply connected geometries with topologically
non-trivial and physically relevant Dirichlet boundary conditions.
Our results are specific to the low-temperature limit. We prove
(i) that (re-scaled) global LdG minimizers converge uniformly to a
(minimizing) limiting harmonic map, away from the singular set of
the limiting map; 
(ii) there exist
both a point of maximal biaxiality
and a nonempty Lebesgue-null set of uniaxial points
near each singular point of the limiting harmonic map
(this improves the recent results of \cite{contreraslamy});
 (iii)
estimates for the size of ``strongly biaxial" regions in terms of
the reduced temperature $t$. 
We further show that global LdG minimizers in the restricted class of uniaxial $\Qvec$-tensors cannot be stable critical points of the LdG energy for low temperatures.  

\end{abstract}
\section{Introduction}
\label{sec:intro}

Nematic liquid crystals (LCs) are anisotropic liquids with
long-range orientational order i.e. the constituent rod-like
molecules have full translational freedom but align along certain
locally preferred directions \cite{dg,virga}. The existence of
distinguished directions renders nematics sensitive to light and
external fields leading to unique electromagnetic, optical and
rheological properties \cite{dg,lavrentovich,zbd}. The
analysis of nematic spatio-temporal patterns is a fascinating
source of problems for mathematicians, physicists and engineers
alike, especially in the context of defects or material
singularities \cite{lavrentovich, vitelli}.

In recent years, mathematicians have turned to the analysis of the
celebrated Landau-de Gennes (LdG) theory for nematic liquid
crystals, particularly in two asymptotic limits: the vanishing
elastic constant and the low-temperature limit; see for example
\cite{amaz, hm, canevari, contreraslamy} which is not an
exhaustive list but are directly relevant to our paper. The LdG
theory is a variational theory with an associated energy
functional, defined in terms of a macroscopic order parameter,
known as the $\Qvec$-tensor order parameter \cite{dg,virga,lin}.
The LdG energy typically comprises an elastic energy, convex in
$\grad \Qvec$ with several elastic constants, and a non-convex
bulk potential, $f_B$ defined in terms of the temperature and the
eigenvalues of $\Qvec$-tensor \cite{newtonmottram}. With the
one-constant approximation for the elastic energy density, the LdG
energy functional has a similar structure to the Ginzburg-Landau
(GL) functional for superconductivity \cite{bbh,PacardRiviere,
chapman} and in certain asymptotic limits (limit of vanishing
elastic constant and low-temperature limit), we can borrow several
ideas from GL theory to make qualitative predictions about global
energy minimizers, at least away from singularities. However,
there is an important distinction between the GL theory and LdG
theory. In the GL-framework, researchers study maps, $\mathbf{u}:
\Rr^d \to \Rr^d$, $d=2,3$ (see e.g. \cite{bbh, pisante, MillotPisante}), whereas the
LdG variable is a five-dimensional map, $\Qvec: \Rr^3 \to \Rr^5$.
A uniaxial $\Qvec$-tensor has three degrees of freedom (with an
order parameter and a single distinguished direction) and in the
uniaxial case, there is broader scope for methodology transfer
from GL-based techniques (see for example, \cite{hm}).
 A biaxial $\Qvec$-tensor has five degrees of freedom and there are
a plethora of open questions about how the two extra degrees of
freedom manifest in the mathematics and physics of biaxial
systems.

We re-visit questions related to the uniaxial versus biaxial
structure of global LdG minimizers in the low-temperature limit.
We work with ``nice" three-dimensional (3D) domains as described in
the abstract and with fixed topologically non-trivial Dirichlet
boundary conditions. In particular, the Dirichlet condition is a
minimizer of the potential, $f_B$, in the LdG energy. Our first
result concerns the uniform convergence of global energy
minimizers to a (minimizing) limiting harmonic map, away from the
singular set of the limiting harmonic map, in the low-temperature
limit. The uniform convergence follows from a Bochner-type
inequality for the LdG energy density, first derived in
\cite{amaz} in the vanishing elastic constant limit. There are
subtle mathematical differences between the low-temperature and
vanishing elastic constant limits; in particular, the bulk potential $f_B$ comprises two terms that diverge at different rates in the low temperature limit. This is explained in more detail 
after the proof of Lemma \ref{le:bochner}
 and requires us to consider mathematical cases or
scenarios which do not arise in the vanishing elastic constant
case. The uniform convergence gives a fairly good description of
global energy minimizers away from the singular set, $\Sigma$, of
the limiting harmonic map. The singular set, $\Sigma$, consists of
a discrete set of point defects. In \cite{contreraslamy}, the
authors prove the existence of a point of maximal biaxiality for
global minimizers, in the low-temperature limit but do not comment
on the number of such points. We appeal to a
topological result in \cite{canevari} to deduce the existence of
a point of maximal biaxiality and a point of uniaxiality near each singular point in
$\Sigma$, in the low-temperature limit. Maximal biaxiality is
understood in terms of the biaxiality parameter which varies
between $0$ and unity (see \cite{virga, g2} and subsequent
sections for a definition of the biaxiality parameter). We make
the notion of ``strongly biaxial" regions in global energy
minimizers more precise by computing estimates for the size of
such regions in terms of the reduced temperature, $t$. The proof
depends on scaling and blow-up arguments and well-established
results in GL-theory and the theory of harmonic maps (e.g.
\cite{bcl, schoen}). We consider all admissible scenarios and
exclude all but one scenario, based on the arguments above and find that the size of 
``strongly biaxial" regions scales as $t^{-1/4}$ as $t \to \infty$.

In \cite{hm}, we study global LdG energy minimizers on a 3D
droplet with radial boundary conditions, in the low temperature
limit. We appeal to GL-based techniques (see \cite{pisante,
mironescu}) to show that global minimizers, if uniaxial, must have
the radial-hedgehog (RH) structure for low temperatures. The RH
solution is a radially-symmetric critical point of the LdG energy
on a 3D droplet, with a single isotropic point (with $\Qvec=0$) at
the droplet centre and perfect uniaxial symmetry away from the
centre i.e.\ the molecules point radially outwards everywhere away
from the centre \cite{g2,hm}. Further, it is known that the
RH-solution is unstable with respect to symmetry-breaking biaxial
higher-dimensional perturbations for low temperatures
\cite{g2,am2}. In \cite{hm}, it is shown that global minimizers, in
the restricted class of uniaxial tensors, cannot be stable
critical points of the LdG energy on a 3D droplet, for low
temperatures, since they converge to the RH solution in the
low-temperature limit. 
 In
\cite{lamyuniaxial}, the author uses symmetry-based arguments and the structure of the LdG Euler-Lagrange equations to prove that the radial-hedgehog solution (modulo a rotation) is the unique uniaxial critical point on a 3D droplet with radial boundary conditions, for all temperatures.

Our second theorem in this paper generalizes the results in
\cite{hm} to 
arbitrary three-dimensional
geometries with arbitrary physically relevant topologically
non-trivial Dirichlet conditions. There exists a global LdG energy
minimizer in the restricted class of uniaxial $\Qvec$-tensors, for
all temperatures. These restricted uniaxial minimizers necessarily
have non-negative scalar order parameter and satisfy a physically
relevant energy bound. We show that these restricted minimizers
cannot be stable critical points of the LdG energy for low
temperatures. The argument proceeds by contradiction. Appealing to
topological arguments, we show that any uniaxial critical point of
the LdG energy has an isotropic point near each singular point of
the limiting harmonic map, for sufficiently low-temperatures. We
then proceed with a local version of the global analysis in
\cite{hm}, equipped with certain energy quantization results for
harmonic maps \cite{bcl} and blow-up techniques, to deduce the
local RH-structure near each isotropic point. In other words, we
reduce the local analysis near an isotropic or a defect point (in
the uniaxial case) to the model problem of uniaxial equilibria on
a 3D droplet with radial boundary conditions and the local
instability of the RH-profile suffices for our purposes. 
In fact, we believe that the universal RH-defect profile for
uniaxial critical points is not specific to the low-temperature
limit but also extends to the vanishing elastic constant limit
(for all $t>0$) and indeed any limit for which we are guaranteed
uniform convergence to a (minimizing) limiting harmonic map, away
from the singularities of the limiting harmonic map. It is known from \cite{bcl}
that point defects for minimizing harmonic maps have a local
radial profile (modulo a rotation) and we believe that the
uniaxial defect profiles have the same radial profile, weighted by
an appropriate scalar order parameter, which is precisely the RH
solution. The analysis of defect profiles near points of maximal
biaxiality remains an open problem for analysts, although some key
steps are given in \cite{sluckin, penzenstadler, kraljvirga}.

The paper is organized as follows. In Section~\ref{sec:2}, we
review the theoretical background and state our main results. In
Section~\ref{sec:3}, we give the proofs and in
Section~\ref{sec:4}, we conclude with future perspectives.

\section{Statement of results}
\label{sec:2}

Let $\Omega \subset \Rr^3$ be an arbitrary simply-connected 3D domain with smooth boundary. Let $\S^2$ be the set of unit vectors in $\R^3$ and let
$S_0$ denote the set of symmetric, traceless
$3\times 3$ matrices i.e.
\begin{equation}
\label{eq:1} S_0 = \left\{\Qvec\in M^{3\times 3};
\Qvec_{ij}=\Qvec_{ji}; \Qvec_{ii}=0 \right\},
\end{equation}
where $M^{3\times 3}$ is the set of $3\times 3$ matrices. The
corresponding matrix norm is defined to be \cite{amaz}
\begin{equation}
\label{eq:2} |\Qvec|^2 = \Qvec_{ij} \Qvec_{ij} \quad i,j=1\ldots 3
\end{equation}
and we use the Einstein summation convention throughout the paper.

We work with the Landau-de Gennes (LdG) theory for nematic liquid
crystals \cite{dg} whereby a LC state is
described by a macroscopic order parameter: the
$\Qvec$-tensor order parameter. The $\Qvec$-tensor is a macroscopic measure of the LC anisotropy. Mathematically, the LdG $\Qvec$-tensor order parameter is a symmetric, traceless
$3\times 3$ matrix in the space $S_0$ in (\ref{eq:1}).
A LC state is said to be (i) isotropic (disordered with no orientational ordering) when $\Qvec=0$, (ii) uniaxial when $\Qvec$ has two degenerate non-zero eigenvalues and (iii) biaxial when $\Qvec$ has three distinct eigenvalues. A uniaxial $\Qvec$-tensor can be written as
\begin{align} \label{eq:uniaxial}
        \vec Q(\vec x) = s(\vec x) \left (\vec n(\vec x) \otimes \vec n(\vec x) - \frac{\vec I}{3}\right ),
    \end{align}
    for some $s(\vec x)\in \R$ and some unit-vector
    $\vec n(\vec x)\in \S^2$, for a.e.\ $\vec x \in \Omega$. We include $s=0$ in our definition although $s=0$ corresponds to the isotropic phase.
    The unit-vector, $\vec n$, is the director or equivalently, the single
    distinguished direction of molecular alignment in the sense that all
    directions orthogonal to $\vec n$ are physically equivalent for an uniaxial nematic.
    We recall the
definition of the biaxiality parameter \cite{g2}, \cite{virga},
\begin{equation}
\label{eq:maj9} \beta^2 = 1 - \frac{6
(\textrm{tr}\Qvec^3)^2}{|\Qvec|^6} \in \left[0, 1
\right].\end{equation} In particular, $\beta^2=0$ if and only if
$\Qvec$ is uniaxial i.e.\ if and only if $|\Qvec|^6 = 6\left(\textrm{tr} \Qvec^3 \right)^2$.

The LdG theory is a variational theory and has an associated LdG free energy. The LdG energy density is a nonlinear function of $\Qvec$ and its spatial derivatives
\cite{dg,newtonmottram}. We work with the simplest form of the
LdG energy functional that allows for a first-order
nematic-isotropic phase transition and spatial inhomogeneities as
shown below \cite{amaz, newtonmottram}:
\begin{equation}
\label{eq:3} \I \left[\Qvec \right] = \int_{\Omega}
\frac{L}{2}|\grad \Qvec|^2 + f_B\left(\Qvec\right)~dV.
\end{equation}
Here, $L>0$ is a small material-dependent elastic constant,
$|\grad \Qvec|^2 = \Qvec_{ij,k}\Qvec_{ij,k}$ (note that
$\Qvec_{ij,k} = \frac{\partial \Qvec_{ij}}{\partial \xvec_k}$)
with $i,j,k=1,2,3$ is an \emph{elastic energy density} and
$f_B:S_0 \to \Rr$ is the \emph{bulk energy density} that dictates
the preferred nematic phase: isotropic/uniaxial/biaxial. For our
purposes, we take $f_B$ to be a quartic polynomial in the
$\Qvec$-tensor invariants:
\begin{equation}
\label{eq:4} f_B(\Qvec) = \frac{A(T)}{2}\textrm{tr}\Qvec^2 - \frac{B}{3}\textrm{tr}\Qvec^3 +
\frac{C}{4}\left(\textrm{tr}\Qvec^2\right)^2
\end{equation}
where $\textrm{tr}\Qvec^3 = \Qvec_{ij}\Qvec_{jp}\Qvec_{pi}$ with
$i,j,p=1,2,3$; $A(T) = \alpha(T - T^*)$; $\alpha, B, C>0$ are material-dependent
constants, $T$ is the absolute temperature and $T^*$ is a
characteristic temperature below which the isotropic phase,
$\Qvec=0$, loses its stability \cite{newtonmottram,ejam}. We work in the low temperature
regime with $T<<T^*$ (or $A <0$)  and subsequently investigate the $A\to - \infty$ limit,
known as the \emph{low temperature} limit. One can readily verify
that $f_B$ is bounded from below and attains its minimum on the
set of $\Qvec$-tensors given by \cite{ejam,am2}
\begin{equation}
\label{eq:6} \Qvec_{\min} = \left\{ \Qvec \in S_0; \Qvec = s_+
\left(\nvec \otimes \nvec - \frac{\mathbf{I}}{3}\right),~\nvec \in
\S^2 \right\},
\end{equation}
$\mathbf{I}$ is the $3\times 3$ identity matrix and
\begin{equation}
\label{eq:s+} s_+ = \frac{B + \sqrt{B^2 + 24 |A| C}}{4 C}.
\end{equation} The set, (\ref{eq:6}), is the set of uniaxial $\vec Q$-tensors with constant order parameter, $s_+$.

In what follows, we take the Dirichlet boundary condition to be
\begin{align}
\label{eq:DC} \Qvec_{b,A}(\xvec) = s_+ \left( \vec n_b \otimes
\vec n_b - \frac{\mathbf{I}}{3}\right)
\end{align}
for some arbitrary smooth unit-vector field, $\vec n_b$, with topological degree $d\ne 0$
(see, e.g., \cite{degree} and \cite{bcl} for the definition and the main properties of the
topological degree).  
  The corresponding admissible space is
\begin{equation}
\label{eq:AS} \Acal_A = \left\{ \Qvec \in
W^{1,2}\left(\Omega;S_0\right); \Qvec = \Qvec_{b,A} ~\text{on}~
\partial \Omega \right\},
\end{equation}
where $W^{1,2}\left(\Omega;S_0\right)$ is the Soboblev space of
square-integrable $\Qvec$-tensors with square-integrable first
derivatives \cite{evans}, with norm
$$||\Qvec||_{W^{1,2}} = \left(\int_{\Omega} |\Qvec|^2 +
|\grad \Qvec|^2~dV \right)^{1/2}.$$  In what follows, we identify
the degree of a uniaxial $\Qvec$-tensor in $\Acal_A$ on the boundary, with the
degree of the director field, $\nvec \in W^{1,2}\left(\Omega; S^2
\right)$ on the boundary, $\textrm{deg}\, (\nvec, \partial \Omega)$,  which is well defined because $\nvec_b$ is smooth. The existence of a
global minimizer of $\I$ in the admissible space, $\Acal_A$, is
immediate from the direct method in the calculus of variations
\cite{evans}; the details are omitted for brevity. It follows from
standard arguments in elliptic regularity that all global
minimizers are smooth and real analytic solutions of the
Euler-Lagrange equations associated with $\I$ on $\Omega$,
\begin{equation}
\label{eq:ELeqs} L \Delta \Qvec= A \Qvec -
B\left(\Qvec^2 -
\left(\textrm{tr}\Qvec^2\right)\frac{\vec I}{3}\right) + C
\left(\textrm{tr}\Qvec^2\right) \Qvec,
\end{equation}
where $B\left(\textrm{tr}\Qvec^2\right)\frac{\vec I}{3}$ is a
Lagrange multiplier accounting for the tracelessness constraint
\cite{amaz}.

Define the re-scaled maps, $\bar {\vec Q}:= \frac{1}{s_+}\sqrt{\frac{3}{2}} \vec Q$. Let
    \begin{align} \label{eq:rTj}
        t:= \frac{27|A|C}{B^2}, \  h_+=\frac{3+\sqrt{9+8t}}{4},
        \  
        \xi_b = \sqrt{\frac{27 L C}{B^2 t}}
        \quad \text{and}\quad
        \bar L:= \frac{27C}{2B^2}L.
    \end{align} Then
     $  s_+ = \frac{B}{3C} h_+  $
    and the minimum of the bulk energy density, $f_B$ in (\ref{eq:4}), is
    \begin{equation}\label{eq:fBmin}
    \min_{\Qvec \in S_0} f_B\left( \Qvec \right) = - \frac{1}{8}\left( t + h_+ \right).
    \end{equation}
    The low-temperature limit corresponds to $t \to \infty$.

    The re-scaled LdG energy is then given by
    \begin{equation}
    \label{LdGnew2}
            \frac{3 \bar L }{2Ls_+^2}       \mathbf{I}_{LG}[\bar{\Qvec}]
        = \int_{\Omega}
        \frac{\bar L}{2}|\grad \bar{\Qvec}|^2
        + \frac{t}{8}\left(1- |\bar{\Qvec}|^2 \right)^2 +
        \frac{h_+}{8} (1+3|\vbar Q|^4-4\sqrt{6}\textrm{tr}\vbar Q^3)~dV
        \end{equation}
        
    Note that the bulk potential has two contributions in \eqref{LdGnew2}, both of which are both nonnegative in view of \eqref{eq:maj9}. The first term vanishes for $\bar{\Qvec} \in \mathbb{S}^4$ and the second term vanishes if and only if $\bar{\Qvec}$ is uniaxial with unit norm (from \eqref{eq:maj9} again).
            The re-scaled boundary condition is
        $\bar{\vec Q}_{b} = \sqrt{\frac{3}{2}}\left(\nvec_b \otimes \nvec_b - \frac{\mathbf{I}}{3} \right)$. \textbf{In what follows, all statements are to be understood in terms of the re-scaled variables and we drop the bars from the variables for brevity. We recall the definition of a minimizing limiting harmonic map.}


%
\begin{definition}
    A (minimizing) limiting harmonic map with respect to the re-scaled Dirichlet condition in \eqref{eq:DC}, is a uniaxial map of the form
\begin{equation}
    \label{eq:form}
     \Qvec^0 = \sqrt{\frac{3}{2}} \left( \nvec^0 \otimes \nvec^0 -
    \frac{\mathbf{I}}{3} \right),
\end{equation}
where $\nvec^0$ is a minimizer of the Dirichlet energy
\begin{equation}
\label{eq:OFenergy} I[\nvec] = \int_{\Omega} |\grad \nvec|^2~dV
\end{equation}
in the admissible space $\Acal_{\nvec_b} = \left\{ \nvec \in
W^{1,2}\left(\Omega;\S^2\right); \nvec = \vec n_b
~on~ \partial \Omega\right\}$ \cite{schoen}.
\end{definition}
In particular, $\nvec_0$ is a harmonic map into $\S^2$, i.e., a
solution of the harmonic map equations $\Delta \nvec + |\grad
\nvec|^2 \nvec =0$. The singular set of $\nvec_0$, denoted by
$\Sigma=\left\{ \xvec_1\ldots \xvec_N \right\} \subset \Omega$, is a finite set of points \cite{schoen,schoen3}.

We have two main theorems.

%
%
%
%
\begin{theorem} \label{th:biaxial2}

Let $\Omega\subset \R^3$ be as above. Let $\{t_j\}_{j\in\N}$ with
$t_j\overset{j\to\infty}{\longrightarrow} +\infty$ and let $\{\vec
Q_j\}_{j\in \N}$ be a corresponding global minimizer of the LdG
energy in (\ref{LdGnew2}), in the admissible space $\bar{\Acal}_A = \left\{ \Qvec \in
W^{1,2}\left(\Omega;S_0\right); \Qvec = \sqrt{\frac{3}{2}}\left(\nvec_b \otimes \nvec_b - \frac{\mathbf{I}}{3}\right) ~\text{on}~
\partial \Omega \right\}.$ Then (up to a subsequence), we have the
following results.
\begin{enumerate}
[(i)]\item  $\{\vec Q_j\}$ converges to a
 limiting harmonic map, $\Qvec^0$ defined in (\ref{eq:form}), strongly in $W^{1,2}(\Omega; S_0)$ and uniformly
  away from $\Sigma$, as $j \to \infty$.
\item \label{it:localization} Let $\Sigma_{{\eps}} = \left\{ \xvec \in \Omega:
dist\left(\xvec,\Sigma \right) < {\eps} \right\} =
\bigcup_{\xvec_i \in \Sigma} B_{{\eps}}(\xvec_i)$ where
$B_{{\eps}}(\xvec_i)$ denotes a ball of radius ${\eps}$
centered at $\xvec_i$, and
 $ B_\delta^j =
\left\{ \xvec \in \Omega: \beta^2\left( \Qvec_j(\xvec) \right) >
\delta \right\}$, for a fixed ${\eps}>0$ and $\delta \in
\left(0, 1 \right)$. Then $B_\delta^j \subseteq \Sigma_{{\eps}}$
for $j$ large enough.
\item $\left| \Qvec_j \right| \to 1 $
uniformly on $\Omega$ as $j \to \infty$.
 \item \label{it:uniaxial1} For each $\xvec_i \in \Sigma$, we have for $j$ large
enough, 
\begin{equation}
\label{eq:thm2a} \min_{\xvec \in \overline{B_{{\eps}}(\xvec_i)}}
\beta^2\left(\Qvec_j(\xvec) \right) =0, \quad \max_{\xvec
\in \overline{B_{{\eps}}(\xvec_i)}}\beta^2\left(\Qvec_j(\xvec)
\right) = 1,
\end{equation}
and $\mathcal L^n(\{\vec x \in \overline{B_{{\eps}}(\xvec_i)}: 
	\beta^2\left (\vec Q_j(\vec x)\right )=0\})=0$.
\item For each $\xvec_i \in \Sigma$ and $\delta \in \left(0, 1 \right)$, we have
\begin{equation}
\label{eq:thm2b} \textrm{diam}\left( B_{\eps}(\xvec_i) \cap
B_\delta^j \right) \sim t_j^{-1/4}
\end{equation}
for $j$ sufficiently large.
\end{enumerate}
\end{theorem}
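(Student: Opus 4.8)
The plan is to read off the correct length scale from the two competing potential terms and then to localise the strongly biaxial region by a blow-up analysis around each $\xvec_i$. Write the rescaled LdG energy density as $e_j=\frac{\bar L}{2}|\grad\Qvec_j|^2+\frac{t_j}{8}(1-|\Qvec_j|^2)^2+\frac{h_+}{8}\Phi(\Qvec_j)$ with $\Phi(\Qvec):=1+3|\Qvec|^4-4\sqrt6\,\textrm{tr}\,\Qvec^3\ge0$, and recall $h_+=\frac{3+\sqrt{9+8t_j}}{4}\sim(t_j/2)^{1/2}$. Balancing the gradient term against each potential gives two distinct coherence lengths: the \emph{melting} scale $\sim(\bar L/t_j)^{1/2}\sim t_j^{-1/2}$, on which $|\Qvec_j|$ recovers the value $1$, and the larger \emph{biaxial} scale $\sim(\bar L/h_+)^{1/2}\sim t_j^{-1/4}$, on which $\Phi$ enforces uniaxiality. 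Since $t_j^{-1/4}\gg t_j^{-1/2}$, the biaxial escape lives on the coarser scale, and the claim $\diam(B_\eps(\xvec_i)\cap B_\delta^j)\sim t_j^{-1/4}$ asserts precisely that the strongly biaxial set is governed by the second potential.

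Set $\lambda_j:=\diam(B_\eps(\xvec_i)\cap B_\delta^j)$. By part (\ref{it:uniaxial1}) this set contains a point of maximal biaxiality, so $\lambda_j>0$ is well defined; by part (\ref{it:localization}), for every $\eps'>0$ the set lies in $B_{\eps'}(\xvec_i)$ once $j$ is large, so $\lambda_j\to0$. I would then blow up, $\tilde\Qvec_j(\yvec):=\Qvec_j(\xvec_i+\lambda_j\yvec)$, using an (almost-)monotonicity formula for the rescaled energy together with the strong $W^{1,2}$ convergence of part (i) to obtain uniform bounds $\rho^{-1}\int_{B_\rho(\xvec_i)}e_j\dd V\le C$, hence uniformly bounded and equicontinuous rescaled maps on each fixed ball $B_R$. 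The normalised rescaled energy then carries the potential coefficients $t_j\lambda_j^2$ and $h_+\lambda_j^2$, and the whole argument reduces to comparing $\lambda_j$ with $t_j^{-1/4}$ through $\sigma_j:=\lambda_j t_j^{1/4}$.

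The heart of the proof is a case analysis on $\sigma_j$, excluding every behaviour except $\sigma_j$ bounded away from $0$ and $\infty$. If $\sigma_j\to\infty$, both coefficients $t_j\lambda_j^2,\,h_+\lambda_j^2\to\infty$, so the blow-up limit is unit-norm and uniaxial ($\Phi\equiv0$); since $\lambda_j\to0$ it is the minimising tangent map of $\Qvec^0$ at $\xvec_i$, which by \cite{bcl} is the degree-one radial hedgehog with a single singularity, whence $\beta^2(\tilde\Qvec_j)\to0$ locally uniformly off the origin and the biaxial set collapses, contradicting its rescaled diameter $1$. If $\sigma_j\to0$, the biaxiality coefficient $h_+\lambda_j^2\to0$ disappears, so biaxiality is no longer energetically confined: the limit is a homogeneous minimising harmonic map (or a melting-core minimiser when $\lambda_j\sim t_j^{-1/2}$), whose strongly biaxial set is scale-invariant or unbounded, again incompatible with finite rescaled diameter $1$. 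The surviving regime $\sigma_j\sim1$ yields the limiting $\S^4$-valued problem of minimising $\int\frac{\bar L}{2}|\grad\tilde\Qvec|^2+c_0\,\Phi(\tilde\Qvec)$, $c_0>0$, with degree-one uniaxial data at infinity; by the topological result of \cite{canevari} (as in part (\ref{it:uniaxial1})) its minimiser has a point of maximal biaxiality, so by smoothness $\{\beta^2>\delta\}$ is open and nonempty while the $\Phi$-penalty forces $\beta^2\to0$ at infinity, making this set of positive finite diameter. Undoing the rescaling gives $\lambda_j\sim t_j^{-1/4}$.

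I expect the main obstacle to be exactly the small-scale exclusion, where the two diverging potentials — absent from the single-scale vanishing-elastic-constant problem — genuinely interfere. The delicate regime is $t_j^{-1/2}\lesssim\lambda_j\ll t_j^{-1/4}$: one must show that the melting-core limit, in which the biaxiality penalty has been scaled away, cannot confine its strongly biaxial set to the melting scale but necessarily spreads it out, so the true diameter is pushed up to $t_j^{-1/4}$. This needs a careful description of the melting-core minimiser, quantitative control of $\beta^2$ across the annulus between the two scales, and the known instability of the uniaxial radial profile; it is here that the blow-up, harmonic-map, and Ginzburg--Landau inputs of \cite{bcl,schoen} must be combined most carefully.
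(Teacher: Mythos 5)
Your proposal addresses only part (v) of the theorem; parts (i)--(iv) are taken as given, whereas the paper must first establish them (the uniform convergence in (i) rests on a Bochner inequality proved by a delicate three-case analysis on $\textrm{tr}\,\Qvec^3$ and on $|\Qvec|^3-\sqrt{6}\,\textrm{tr}\,\Qvec^3$, (iii) on a Ginzburg--Landau blow-up/blow-down, and (iv) on a homotopy argument using the retraction of the set $\mathcal N_\sigma$ onto $\Qvec_{\min}\cong\R P^2$). Even restricted to (v), your scale heuristic ($t_j^{-1/2}$ for melting, $t_j^{-1/4}$ for biaxial escape, since $h_+\sim t_j^{1/2}$) and the overall strategy of excluding all regimes for $\sigma_j=d_jt_j^{1/4}$ are exactly right, but two steps would fail as written. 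First, you rescale about $\xvec_i$; but the strongly biaxial set is only known to converge to $\xvec_i$, at no specified rate, so in coordinates $\yvec=(\xvec-\xvec_i)/\lambda_j$ it may drift to infinity and your ``rescaled diameter $1$'' normalization carries no information on any fixed ball. The paper instead picks $\vec p_j,\vec q_j$ realizing the diameter and blows up about their midpoint $\hat\xvec_j$, which pins down the normalization $\beta^2=\delta$ at two antipodal points of the unit sphere together with $\max\beta^2=1$ nearby; this is the quantity that must survive passage to the limit and be contradicted.

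Second, and more seriously, your exclusion of $\sigma_j\to0$ is asserted rather than proved: the claim that the limit's ``strongly biaxial set is scale-invariant or unbounded'' is not the right mechanism and is not justified. The actual mechanism, in all three small-scale regimes, is that the blow-up limit is a \emph{constant} matrix of norm one, so its biaxiality is constant and cannot equal $\delta$ at two points while reaching $1$ elsewhere. Proving constancy is where the real work lies: for $d_j\ll t_j^{-1/2}$ it is Liouville's theorem for bounded entire harmonic maps; for $d_j\sim t_j^{-1/2}$ it is the blow-down of an entire Ginzburg--Landau minimizer with linear energy growth to a degree-zero homogeneous harmonic map, which is constant by the Schoen--Uhlenbeck constancy of stable tangent maps into spheres, forcing the GL minimizer itself to be constant via the monotonicity formula; for $t_j^{-1/2}\ll d_j\ll t_j^{-1/4}$ it is the same constancy theorem applied to an entire $\S^4$-valued minimizing harmonic map (with strong $W^{1,2}_{\loc}$ convergence and persistence of minimality obtained via the Luckhaus interpolation lemma). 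You flag this regime as ``the main obstacle'' but do not supply these inputs, so the exclusion is a genuine gap. Conversely, your positive analysis of the surviving regime $\sigma_j\sim1$ (existence of a maximal-biaxiality point for the limiting functional $\int\frac{\bar L}{2}|\grad\Qvec|^2+c_0\Phi(\Qvec)$) is not needed: once every other regime is excluded along arbitrary subsequences, $d_j\sim t_j^{-1/4}$ follows, and the paper does not attempt such a construction.
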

An immediate consequence is that global energy minimizers cannot be purely uniaxial, as also stated in \cite{contreraslamy} where the authors prove the existence of at least a single point of maximal biaxiality for global LdG minimizers.

\begin{theorem} \label{th:radial}
    Let $\Omega\subset \R^3$ be as above.
    Let $\{t_j\}_{j\in\N}$ be such that $t_j \to \infty$ as $j\to\infty$. For each $j\in\N$, let $\{\vec Q_j\}_{j\in \N}$ be a global minimizer of the LdG energy (\ref{LdGnew2}) in the restricted class of uniaxial $\Qvec$-tensors of the form (\ref{eq:uniaxial}). Then $\vec Q_j$ has non-negative scalar order parameter and
         $\vec Q_j$ satisfies the following energy bound
        \begin{equation}
        \label{eq:MAIN} \displaystyle
            \frac{3\bar L }{2Ls_+^2} \mathbf{I}_{LG}[\vec Q_j]\leq \frac{3\bar L}{2}\cdot \inf_{\vec n \in \mathcal A_{\vec n_b}} I[\vec n],
            \end{equation} with $I[\vec n]$ and ${\cal A}_{\vec n_b}$ as in \eqref{eq:OFenergy}, for each $j \in \N$. For $j$ sufficiently large, $\vec Q_j$ cannot be a stable critical point of the LdG energy in (\ref{LdGnew2}) in the admissible space,  $\bar{\Acal}_A = \left\{ \Qvec \in
W^{1,2}\left(\Omega;S_0\right); \Qvec = \sqrt{\frac{3}{2}}\left(\nvec_b \otimes \nvec_b - \frac{\mathbf{I}}{3}\right) ~\text{on}~
\partial \Omega \right\}.$
\end{theorem}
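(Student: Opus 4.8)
The plan is to handle the three assertions in turn, with the instability being the substantial one.

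\emph{Non-negativity of the scalar order parameter and the energy bound.}
Writing a uniaxial competitor as $\Qvec = s(\vec n\otimes\vec n - \mathbf I/3)$ with $s\in W^{1,2}(\Omega;\R)$ and $\vec n \in W^{1,2}(\Omega;\S^2)$, I would use the orthogonal decomposition
\begin{equation*}
|\grad\Qvec|^2 = \tfrac23|\grad s|^2 + 2s^2|\grad\vec n|^2,
\end{equation*}
valid because $\vec n\otimes\vec n - \mathbf I/3$ has constant norm $\sqrt{2/3}$ and is pointwise orthogonal to its derivatives. For the first claim, write $\Qvec_j = s_j(\vec n_j\otimes\vec n_j - \mathbf I/3)$ and compare it with $\tilde\Qvec_j := |s_j|(\vec n_j\otimes\vec n_j - \mathbf I/3)$: since $|\grad|s_j||=|\grad s_j|$ a.e.\ and $|s_j|^2=s_j^2$, the elastic energies coincide by the decomposition, while the only part of the bulk potential in \eqref{LdGnew2} that is not even in $s_j$ is $-4\sqrt6\,\tr\Qvec_j^3$, and on $\{s_j<0\}$ one has $\tr\Qvec_j^3<0$, so passing to $|s_j|$ strictly lowers the bulk energy there. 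Thus $\tilde\Qvec_j$ is admissible with strictly smaller energy unless $\{s_j<0\}$ is Lebesgue-null, forcing $s_j\ge0$. For the energy bound, for any $\vec n\in\Acal_{\vec n_b}$ the map $\Qvec^{\vec n} := \sqrt{3/2}(\vec n\otimes\vec n - \mathbf I/3)$ is admissible, lies in the minimum set of the potential (so both bulk terms vanish), and by the decomposition satisfies $|\grad\Qvec^{\vec n}|^2 = 3|\grad\vec n|^2$; minimality of $\Qvec_j$ then gives $\frac{3\bar L}{2Ls_+^2}\I[\Qvec_j] \le \int_\Omega \frac{\bar L}{2}\cdot 3|\grad\vec n|^2\,dV = \frac{3\bar L}{2}I[\vec n]$, and taking the infimum over $\vec n$ yields \eqref{eq:MAIN}.

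\emph{Reduction to a limiting harmonic map and isotropic points.}
From \eqref{eq:MAIN} the rescaled energies are uniformly bounded, so exactly as in Theorem \ref{th:biaxial2} a subsequence of $\{\Qvec_j\}$ converges weakly in $W^{1,2}$ and uniformly away from a finite set $\Sigma$ to a uniaxial limit $\Qvec^0 = \sqrt{3/2}(\vec n^0\otimes\vec n^0 - \mathbf I/3)$, with $|\Qvec_j|\to1$ uniformly; lower semicontinuity together with \eqref{eq:MAIN} forces $I[\vec n^0]=\inf_{\Acal_{\vec n_b}}I$, so $\vec n^0$ is a minimizing harmonic map and, since $d\ne0$, $\Sigma\ne\varnothing$. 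The key topological step is: assuming $\Qvec_j$ is a (necessarily smooth) critical point of the full LdG energy, for each $\vec x_i\in\Sigma$ and small $\rho$ the map $\Qvec_j$ must vanish somewhere in $B_\rho(\vec x_i)$ for $j$ large. Indeed, on $\partial B_\rho(\vec x_i)$ the director $\vec m_j$ of $\Qvec_j$ (well defined once $s_j>0$, which holds for $j$ large by uniform convergence and Part 1) is homotopic to $\vec n^0$, whose class in $\pi_2(\mathbb{RP}^2)\cong\Z$ is the nonzero local defect degree; were $\Qvec_j$ nonvanishing on $\overline{B_\rho(\vec x_i)}$, the line field $\vec m_j:\overline{B_\rho(\vec x_i)}\to\mathbb{RP}^2$ would extend continuously over the ball, contradicting the non-nullhomotopy of its boundary restriction. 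This produces an isotropic point $\vec y_j\in B_\rho(\vec x_i)$, following the topological result of \cite{canevari}.

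\emph{Blow-up to the radial hedgehog and instability.}
Rescaling at the coherence scale $\lambda_j\sim t_j^{-1/2}$ about $\vec y_j$, set $\tilde\Qvec_j(\vec z)=\Qvec_j(\vec y_j+\lambda_j\vec z)$. Using the monotonicity formula and the uniform energy bound, together with minimality of $\Qvec_j$ in the uniaxial class, I would show that $\tilde\Qvec_j$ converges in $C^2_{loc}(\R^3)$ to a nonconstant, uniaxial, entire critical point $\tilde\Qvec_\infty$ with $|\tilde\Qvec_\infty|\to1$ at infinity; its director is a degree-$\pm1$ minimizing tangent harmonic map, hence radial modulo a rotation by \cite{bcl}, and the scalar profile solves the radial-hedgehog ODE, so $\tilde\Qvec_\infty$ is the RH solution. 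This reduces the local picture exactly to the global radial problem of \cite{hm,lamyuniaxial}. Since RH is an unstable critical point for low temperatures \cite{g2,am2}, there is a localized biaxial perturbation $\vec V$ lowering its second variation; transplanting it as $\vec W_j(\vec x)=\vec V((\vec x-\vec y_j)/\lambda_j)$ (admissible since supported in $B_{R\lambda_j}(\vec y_j)\subset\subset\Omega$) and using the $C^2_{loc}$ convergence, the suitably rescaled second variation $\lambda_j^{-1}\delta^2\I[\Qvec_j](\vec W_j)$ converges to the negative RH value, exactly as in the global analysis of \cite{hm}. Hence $\delta^2\I[\Qvec_j](\vec W_j)<0$ for $j$ large and $\Qvec_j$ cannot be a stable critical point.

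I expect the main obstacle to be the blow-up identification in the last paragraph: controlling the rescaled energy at the core scale so that the limit is nontrivial, proving that uniaxiality and minimality pass to the limit, and identifying the director as the radial harmonic map via the energy quantization of \cite{bcl}. This is precisely the local counterpart of the delicate analysis of \cite{hm}, and it also underlies the rigorous, scaling-consistent transfer of the RH instability back to $\Qvec_j$.
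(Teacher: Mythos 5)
Your proposal follows essentially the same route as the paper: the sign-flip competitor $|s_j|(\vec n_j\otimes\vec n_j-\mathbf I/3)$ for non-negativity of the order parameter (the paper phrases it as $-\Qvec_j$ on $\{\det\Qvec_j<0\}$), the uniaxial harmonic-map competitor for the bound \eqref{eq:MAIN}, and the contradiction argument combining uniform convergence to a minimizing harmonic map, the $\pi_2(\R P^2)$ degree obstruction forcing isotropic points near each singularity, blow-up at scale $t_j^{-1/2}$ to the radial hedgehog via the energy quantization of \cite{bcl} and the classification in \cite{hm}, and transplantation of the known biaxial instability. The steps you flag as delicate (admissibility of the flipped competitor, identification of the blow-up limit as the RH profile) are exactly the ones the paper delegates to \cite[Prop.~8]{hm} and to Lemmas \ref{le:homotopy}--\ref{le:blowup}, so the proposal is sound.
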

%
Theorem~\ref{th:radial} is a consequence of
Proposition~\ref{prop:radial2} below and Propositions $3$ and $8$
of \cite{hm}.
\begin{proposition}
\label{prop:radial2}
  Let $\Omega\subset \R^3$ be as above.
    Let $\{t_j\}_{j\in\N}$ be such that $t_j \to \infty$ as $j\to\infty$. Let $\vec Q_j$ be a sequence of uniaxial critical points of the re-scaled LdG energy in (\ref{LdGnew2}) with non-negative scalar order parameter and satisfying the energy bound (\ref{eq:MAIN}) for all $j >0$. Then, passing to a subsequence (still indexed by $j$),
    the sequence $\{\vec Q_j\}$ converges uniformly to a (minimizing) limiting harmonic map,
    $\Qvec^0$ as $j\to\infty$, everywhere away from the
     singular set $\Sigma = \left\{ \xvec_1 \ldots \xvec_N\right\}$ of $\Qvec^0$. We have that
    \begin{enumerate}[(i)] \item for each $i=1,\ldots, N$, there exists $\{\vec x_i^{(j)}\}_{j\in \N}$ such that $\vec Q_{j}(\vec x_i^{(j)})=\vec 0$ for all $j\in\N$ and $\vec x_i^{(j)} \overset{j\to\infty}{\longrightarrow} \vec x_i$ and
           \item given any sequence, $\{\vec x^{(j)}\}_{j\in \N}\subset \Omega$, such that $\vec Q_j(\vec x^{(j)})=\vec 0\ \forall\,j\in\N$, there exists a subsequence $\{j_k\}_{k\in \N}$ and an orthogonal transformation $\vec T\in \mathcal O(3)$ (which may depend on the subsequence) such that
        the shifted maps $\displaystyle \left\{\tilde {\vec x} \mapsto \vec Q_{j_k} \left (\vec x^{(j_k)}+\xi_b \tilde {\vec x}\right )\right\}_{k\in \N}$
        converge to
        \begin{align} \label{eq:hedgehog}
            \vec H_{\vec T}(\tilde{\vec x}):=\sqrt{\frac{3}{2}} h(|\tilde {\vec x}|) \left ( \frac{\vec T \tilde{\vec x} \otimes \vec T\tilde{\vec x}}{|\tilde {\vec x}|^2} - \frac{\vec I}{3} \right ), \quad \tilde{\vec x} \in \R^3,
        \end{align}
         in $C^r_{\loc}(\R^3; S_0)$ for all $r\in \N$, where $h:[0,\infty)\to \R^+$ is the unique, monotonically increasing solution, with $r=|\tilde{\vec x}|$, of the boundary-value problem
        \begin{align}
            \label{eq:RH}
             \frac{d^2 h}{d r^2} + \frac{2}{r}\frac{dh}{dr} -
\frac{6 h}{r^2} = h^3 - h,
\qquad h(0)=0, \qquad \lim_{r\to \infty} h(r)=1.
        \end{align}
    \end{enumerate}
\end{proposition}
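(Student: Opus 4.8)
The plan is to establish the three assertions in order: uniform convergence to a \emph{minimizing} limiting harmonic map, the collapse of isotropic points onto $\Sigma$, and the radial-hedgehog profile of the blow-up.

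\textbf{Convergence to a minimizing harmonic map.} I would first extract compactness from the energy bound \eqref{eq:MAIN}. Since both bulk terms in \eqref{LdGnew2} are nonnegative, \eqref{eq:MAIN} bounds $\tfrac{\bar L}{2}\int_\Omega|\nabla\vec Q_j|^2$ uniformly, so up to a subsequence $\vec Q_j\weakc\vec Q^0$ in $W^{1,2}$. The factor $t_j\to\infty$ in front of $(1-|\vec Q_j|^2)^2$, together with the uniform bound, forces $|\vec Q^0|=1$ and $\beta^2(\vec Q^0)=0$ a.e.\ (recall \eqref{eq:maj9}), so $\vec Q^0=\sqrt{3/2}(\vec n^0\otimes\vec n^0-\vec I/3)$ with $\vec n^0\in W^{1,2}(\Omega;\S^2)$. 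Using the uniaxial identity $|\nabla\vec Q|^2=|\nabla\rho|^2+3\rho^2|\nabla\vec n|^2$ for $\vec Q=\sqrt{3/2}\,\rho(\vec n\otimes\vec n-\vec I/3)$ (so that $\tfrac{\bar L}{2}\int|\nabla\vec Q^0|^2=\tfrac{3\bar L}{2}I[\vec n^0]$), lower semicontinuity gives $\tfrac{3\bar L}{2}I[\vec n^0]\le\liminf_j\tfrac{3\bar L}{2Ls_+^2}\mathbf{I}_{LG}[\vec Q_j]\le\tfrac{3\bar L}{2}\inf_{\vec n}I[\vec n]$. Hence $\vec n^0$ is a minimizing harmonic map, $\vec Q^0$ is of the form \eqref{eq:form}, the elastic energies converge (so $\vec Q_j\to\vec Q^0$ strongly in $W^{1,2}$), and the singular set $\Sigma=\{\vec x_1,\dots,\vec x_N\}$ is finite with singularities of degree $\pm1$ \cite{bcl,schoen}.

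\textbf{Uniform convergence and isotropic points.} The Bochner-type inequality of Lemma~\ref{le:bochner} holds for every solution of \eqref{eq:ELeqs}, not only for minimizers; combined with the convergence of energies and a small-energy (mean-value) regularity argument, it yields uniform local bounds on the rescaled energy density, hence $C^k_{\loc}$ convergence $\vec Q_j\to\vec Q^0$ on compact subsets of $\Omega\setminus\Sigma$. In particular $|\vec Q_j|\to1$ there, so every zero of $\vec Q_j$ eventually lies in any prescribed neighborhood of $\Sigma$. For part (i), fix $\vec x_i\in\Sigma$ and a radius $r$ so small that $\overline{B_r(\vec x_i)}$ meets no other singularity; the restriction of $\vec Q^0$ to $\partial B_r(\vec x_i)$, viewed as a map into the set of unit-norm uniaxial tensors ($\cong\mathbb{RP}^2$), represents the nonzero class $d_i\in\pi_2(\mathbb{RP}^2)\cong\Z$. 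For $j$ large, uniform convergence on $\partial B_r(\vec x_i)$ makes $\vec Q_j$ nonvanishing there and homotopic to $\vec Q^0$, hence of the same nonzero class. Were $\vec Q_j$ nonzero throughout $B_r(\vec x_i)$, then $\vec Q_j/|\vec Q_j|$ would continuously extend the boundary map to the ball, forcing $d_i=0$, a contradiction; so $\vec Q_j$ vanishes in $B_r(\vec x_i)$, and a diagonal argument over $r=1/m$ produces zeros $\vec x_i^{(j)}\to\vec x_i$.

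\textbf{Blow-up.} For part (ii), given zeros $\vec x^{(j)}$ I would rescale at the biaxial coherence length, $\tilde{\vec Q}_j(\tilde{\vec x}):=\vec Q_j(\vec x^{(j)}+\xi_b\tilde{\vec x})$, where $\xi_b=\sqrt{2\bar L/t_j}$ satisfies $\bar L/\xi_b^2=t_j/2$. The rescaled Euler--Lagrange equation then reads $\tilde\Delta\tilde{\vec Q}_j=(|\tilde{\vec Q}_j|^2-1)\tilde{\vec Q}_j+\tfrac{3h_+}{t_j}\big[\,|\tilde{\vec Q}_j|^2\tilde{\vec Q}_j-\sqrt6(\tilde{\vec Q}_j^2-\tfrac{|\tilde{\vec Q}_j|^2}{3}\vec I)\,\big]$, whose inhomogeneity vanishes as $t_j\to\infty$ since $h_+\sim\sqrt{t_j/2}$. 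A uniform $L^\infty$ bound on $\vec Q_j$ (maximum principle applied to $|\vec Q|^2$ in \eqref{eq:ELeqs}) and local energy bounds on $\tilde{\vec Q}_j$ (from the convergence of energies and the quantization of the defect energy \cite{bcl}) give, by elliptic estimates, $\tilde{\vec Q}_{j_k}\to\tilde{\vec Q}_\infty$ in $C^r_{\loc}(\R^3;S_0)$ for all $r$, where $\tilde{\vec Q}_\infty$ is an entire, uniaxial solution of $\Delta\vec Q=(|\vec Q|^2-1)\vec Q$ with $\tilde{\vec Q}_\infty(0)=\vec 0$, non-negative order parameter, and $|\tilde{\vec Q}_\infty|\to1$ at infinity.

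\textbf{Identification with the radial hedgehog (main obstacle).} Writing $\tilde{\vec Q}_\infty=\sqrt{3/2}\,\rho_\infty(\vec n_\infty\otimes\vec n_\infty-\vec I/3)$, the director $\vec n_\infty$ is a harmonic map with a single point singularity at the origin. The crux is to show $\vec n_\infty(\tilde{\vec x})=\vec T\tilde{\vec x}/|\tilde{\vec x}|$ for some $\vec T\in\mathcal O(3)$ and $\rho_\infty=h$. I would argue that the sharpness of the energy bound \eqref{eq:MAIN} leaves no room for excess energy in the defect cores, so that $\vec n_\infty$ is an \emph{energy-minimizing} tangent map; by the Brezis--Coron--Lieb classification \cite{bcl} the only degree-$\pm1$ minimizing tangent maps $\R^3\to\S^2$ are the rotated radial maps $\vec x\mapsto\vec T\vec x/|\vec x|$. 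Substituting this radial ansatz into $\Delta\vec Q=(|\vec Q|^2-1)\vec Q$ reduces the system to the scalar boundary-value problem \eqref{eq:RH} for $\rho_\infty$, with $\rho_\infty(0)=0$ and $\rho_\infty(\infty)=1$, whose unique monotone increasing solution is $h$; thus $\tilde{\vec Q}_\infty=\vec H_{\vec T}$ as in \eqref{eq:hedgehog}. I expect the delicate steps to be (a) upgrading criticality to the local minimality needed to invoke \cite{bcl}, which should follow the local energy-comparison and quantization arguments underlying the global analysis of \cite{hm}, and (b) justifying the uniform local energy bounds on the blow-up that feed the $C^r_{\loc}$ compactness. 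These are precisely the points where the present local statement parallels, and must adapt, the droplet analysis of \cite{hm}.
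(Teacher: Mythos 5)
Your overall architecture coincides with the paper's: the energy bound \eqref{eq:MAIN} plus lower semicontinuity gives strong $W^{1,2}$ convergence to a minimizing limiting harmonic map, the Bochner inequality of Lemma~\ref{le:bochner} and $\eps$-regularity give uniform convergence away from $\Sigma$ (Propositions~\ref{pr:before} and \ref{pr:uniC}), part (i) follows from the degree/lifting argument (the paper's Lemma~\ref{le:homotopy} applied to $\vec Q_j/|\vec Q_j|$, which is exactly $\vec Q_{\min}$-valued here by the uniaxiality hypothesis), and part (ii) begins with a blow-up at the coherence length to an entire uniaxial solution of $\Delta\vec Q=(|\vec Q|^2-1)\vec Q$ with $\vec Q(0)=\vec 0$, non-negative order parameter and the normalized energy bound \eqref{eq:relevantenergy} (the paper's Lemma~\ref{le:blowup}, via the monotonicity formula and the $4\pi$ quantization of \cite{bcl}). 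Up to that point your proposal is sound and essentially identical to the paper's.

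The gap is in the final identification of the blow-up limit with $\vec H_{\vec T}$. The paper does not prove this step; it invokes \cite[Prop.~8]{hm} (restated as Proposition~\ref{prop:S}) as a black box, and your sketched replacement does not close the argument. Concretely: (a) you assert that $\vec n_\infty$ is ``a harmonic map with a single point singularity at the origin,'' but a priori the zero set of $\rho_\infty$ need not be $\{0\}$, and wherever $\rho_\infty>0$ the director satisfies the \emph{weighted} equation $\div(\rho_\infty^2\grad\vec n_\infty)+\rho_\infty^2|\grad\vec n_\infty|^2\vec n_\infty=0$ coupled to an equation for $\rho_\infty$, not the harmonic map equation; (b) the claim that ``the sharpness of the energy bound leaves no room for excess energy, so $\vec n_\infty$ is an energy-minimizing tangent map'' is the correct heuristic but is precisely what must be proved --- one has to blow down at infinity, use the monotonicity formula and the quantization of \cite{bcl} to identify the tangent map at infinity, and then propagate the radial structure from infinity to all of $\R^3$ (the comparison/division argument of \cite{hm}); and (c) even granting $\vec n_\infty=\vec T\tilde{\vec x}/|\tilde{\vec x}|$, you must still show that $\rho_\infty$ depends on $|\tilde{\vec x}|$ alone and is the monotone solution of \eqref{eq:RH} before the system reduces to that ODE. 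All of this is the substance of \cite[Prop.~8]{hm}; either cite it, as the paper does, or supply these arguments explicitly.
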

Proposition~\ref{prop:radial2} provides a local description of the structural profile near a set of isotropic points in the uniaxial critical points, $\Qvec_j$, in terms of the well-known RH solution. The RH solution is a rare example of an explicit critical point of the LdG energy simply given by
\begin{equation}\label{eq:Rhedgehog}
\vec H(\tilde{\vec x}):=\sqrt{\frac{3}{2}} h(|\tilde {\vec x}|) \left ( \frac{ \tilde{\vec x} \otimes \tilde{\vec x}}{|\tilde {\vec x}|^2} - \frac{\vec I}{3} \right ), \quad \tilde{\vec x} \in \R^3,
\end{equation}
where $h$ is defined as in (\ref{eq:RH}). The boundary-value problem (\ref{eq:RH}) has been studied in detail, see for example in \cite{lamy,am2}. The RH solution is locally unstable with respect to biaxial perturbations, as has been demonstrated in \cite{ejam,g2}. 

\section{Proof of the theorems}
\label{sec:3}
%


 Recall that the re-scaled LdG energy is given by
    \begin{equation}
    \label{LdGnew}
            \frac{3\bar L}{2Ls_+^2}
        \mathbf{I}^j_{LG}[\Qvec]
        = \int_{\Omega}
        \frac{\bar{L}}{2}|\grad \Qvec|^2
        + \bar L f(\vec Q, t_j)~dV,
        \end{equation}
        with        
        \begin{equation}
\label{eq:maj6b} \bar{L} f(\Qvec,t) =
\frac{t}{8}\left(1 - |\Qvec|^2 \right)^2 +
\frac{h_+}{8}\left( 1+ 3|\Qvec|^4 - 4\sqrt{6}\textrm{tr}\Qvec^3
\right),
\end{equation}
and that for all $t>0$ the potential $f(\Qvec, t)$ is minimized on the set
\begin{equation}
\label{bpnew} \Qvec_{\min} = \left\{ \sqrt{\frac{3}{2}}\left(
\nvec \otimes \nvec - \frac{\mathbf{I}}{3} \right) :
\vec n \in \S^2 \right\}.
\end{equation}

Denote the LdG energy density by
\begin{equation}
\label{eq:maj6} e(\Qvec,t) =\frac{1}{2}|\grad \Qvec|^2 + f(\Qvec,
t).
\end{equation}
 
 In Theorem \ref{th:biaxial2}
we consider global minimizers 
$\Qvec_j$  of $\mathbf{I}^j_{LG}$
in the
        admissible space $\bar{\Acal}_A = \left\{ \Qvec \in
W^{1,2}\left(\Omega;S_0\right); \Qvec = \sqrt{\frac{3}{2}}\left(\nvec_b \otimes \nvec_b - \frac{\mathbf{I}}{3}\right) ~\text{on}~
\partial \Omega \right\}$ for each $t_j>0$,
the existence of which is guaranteed
by the direct method
        of the calculus of variations. 
        Standard elliptic regularity arguments
        (presented in \cite[Prop.~13]{amaz})
        show that each minimizer $\vec Q_j$
        is a real analytic solution 
        of the 
Euler-Lagrange equations
       \begin{equation}
\label{eq:maj13} \triangle \Qvec_{ij} = \Gamma_{ij},
\end{equation}
where
$$ \bar{L}\Gamma_{ij} = \frac{t}{2}\Qvec_{ij}\left(|\Qvec|^2 - 1 \right)
+ \frac{3 h_+}{2}\left[|\Qvec|^2 \Qvec_{ij} -
\sqrt{6}\Qvec_{ip}\Qvec_{pj} + \sqrt{6}|\Qvec|^2\delta_{ij}/3
\right]. $$
Theorem \ref{th:radial}
is proved by assuming 
        that a sequence $\{\vec Q_j\}_{j\in\N}$ of minimizers
        in the restricted class of uniaxial $\vec Q$-tensors 
        is composed of stable critical points of the LdG energy and then reaching a contradiction.
        In both cases, we consider classical solutions of \eqref{eq:maj13}
        that satisfy the energy bound \eqref{eq:MAIN}
        (this follows from the fact that
        any minimizing limiting harmonic map $\Qvec^0$
        belongs to $\bar{\mathcal{A}}_A$, so it can be used as a trial function).
        As done in \cite{hm,contreraslamy},
        the arguments in \cite[Lemmas 2 and 3; Props.~3, 4, and 6]{amaz}
        can be adapted to prove the following preliminary results.
        
        \begin{proposition} \label{pr:before}
        		Let $t_j\to +\infty$ and, for each $j\in \N$, let $\vec Q_j\in \bar{\Acal}_A$ be a classical solution 
		of the corresponding equations \eqref{eq:maj13},
		satisfying the energy bound \eqref{eq:MAIN}.
		Then, passing to a subsequence, 
		\begin{enumerate}[(i)]
			\item $\{\vec Q_j\}_{j\in \N}$ converges strongly
			 to a
			(minimizing) limiting harmonic map $\vec Q^0$ in $W^{1,2}(\Omega; S_0)$,
			\item  $\| \Qvec_j \|_{L^{\infty}} \leq 1$
			and  $\| \grad \Qvec_j \|_{L^{\infty}} \leq C \sqrt{\frac{t_j}{\bar{L}}}$
			for some  $C$ independent of $j$,
			
			\item \label{it:monotone}
			$\displaystyle \frac{1}{r}\int_{B(\vec x,r)} e(\vec Q_j, t_j)~dV
			\leq \frac{1}{R}\int_{B(\vec x, R)} e(\vec Q_j, t_j)~dV$
			for all $\vec x \in \Omega$ and $r\leq R$ so that $B(\vec x, R)\subset \Omega$,
			
			\item for any compact $K\subset \overline \Omega \setminus \Sigma$,
			where $\Sigma$ denotes the singular set of $\Qvec^0$,
			\begin{align}
			\label{eq:step1}
				 \frac{1}{8}\left(1 - |\Qvec_j|^2 \right)^2 + \frac{h_+}{8 t}( 1 + 3|\Qvec_j|^4 - 4 \sqrt{6}\tr\Qvec_j^3) \to 0
			\end{align} 
			uniformly in $K$.
		\end{enumerate}
        \end{proposition}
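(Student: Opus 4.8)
The plan is to adapt the Ginzburg--Landau arguments of \cite[Lemmas 2 and 3; Props.~3, 4 and 6]{amaz}, keeping track of the one genuinely new feature: the two bulk terms in \eqref{eq:maj6b} diverge at different rates as $t_j\to\infty$ (the coefficient $t_j$ of the first against $h_+\sim\sqrt{t_j/2}$ of the second). The starting point throughout is the uniform energy bound obtained by dividing \eqref{eq:MAIN} by $\bar{L}$, namely $\int_\Omega\tfrac12|\grad\Qvec_j|^2+f(\Qvec_j,t_j)\,dV\le\tfrac32\inf_{\nvec}I[\nvec]$ for every $j$, which controls $\grad\Qvec_j$ in $L^2$ and each of the two nonnegative potential terms in $L^1$. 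For (ii), the bound $\|\Qvec_j\|_{L^\infty}\le1$ follows from the maximum principle applied to $|\Qvec_j|^2$: contracting \eqref{eq:maj13} with $\Qvec_j$ gives $\tfrac12\Delta|\Qvec_j|^2=|\grad\Qvec_j|^2+\Qvec_j:\Gamma_j$, and using $\sqrt6\,\tr\Qvec_j^3\le|\Qvec_j|^3$ (a restatement of $\beta^2\ge0$ in \eqref{eq:maj9}) one checks that $\Qvec_j:\Gamma_j>0$ wherever $|\Qvec_j|>1$; since $|\Qvec_j|=1$ on $\partial\Omega$, no interior maximum with $|\Qvec_j|>1$ can occur. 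For the gradient estimate I would rescale to the intrinsic length $\xi_b\sim\sqrt{\bar{L}/t_j}$: setting $\tilde\Qvec(\yvec)=\Qvec_j(\xvec_0+\xi_b\yvec)$ turns \eqref{eq:maj13} into $\Delta_{\yvec}\tilde\Qvec=\tilde\Gamma$ with $\tilde\Gamma=\tilde\Qvec(|\tilde\Qvec|^2-1)+\tfrac{3h_+}{t_j}(\cdots)$; because $|\Qvec_j|\le1$ and $h_+/t_j\to0$, the right-hand side is uniformly bounded, so interior (and, using the fixed smooth boundary datum, boundary) elliptic estimates bound $\grad_{\yvec}\tilde\Qvec$ by a constant, and undoing the scaling yields $\|\grad\Qvec_j\|_{L^\infty}\le C\sqrt{t_j/\bar{L}}$.

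The monotonicity statement (iii) is the standard stationarity (Pohozaev) identity for critical points of $\int\tfrac12|\grad\Qvec|^2+f$: testing \eqref{eq:maj13} with the inner variation $\xvec\cdot\grad\Qvec_j$ and integrating over $B(\xvec,r)\subset\Omega$ produces $\tfrac{d}{dr}\big(\tfrac1r\int_{B(\xvec,r)}e(\Qvec_j,t_j)\big)=\tfrac1r\int_{\partial B(\xvec,r)}|\partial_r\Qvec_j|^2+\tfrac{2}{r^2}\int_{B(\xvec,r)}f(\Qvec_j,t_j)$, both terms being nonnegative. The only point to verify is that the Lagrange-multiplier part of $\Gamma_j$ does not contribute, which holds because $\grad\Qvec_j$ is traceless; integrating in $r$ gives the claimed inequality.

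For (i), the uniform bounds give $\Qvec_j\weakc\Qvec^0$ in $W^{1,2}$ (up to a subsequence) with $\Qvec_j\to\Qvec^0$ a.e.; since the two potential terms are bounded in $L^1$ while their coefficients diverge, passing to the limit forces $|\Qvec^0|=1$ and $1+3|\Qvec^0|^4-4\sqrt6\,\tr(\Qvec^0)^3=0$ a.e., i.e.\ $\Qvec^0$ is uniaxial of unit norm and, by a standard lifting on the simply connected $\Omega$, of the form \eqref{eq:form} with $\nvec^0\in W^{1,2}(\Omega;\S^2)$. Using $|\grad\Qvec^0|^2=3|\grad\nvec^0|^2$, lower semicontinuity against the energy bound gives $\tfrac32 I[\nvec^0]\le\liminf\int\tfrac12|\grad\Qvec_j|^2\le\tfrac32\inf I$, so $\nvec^0$ minimizes $I$; the matching $\limsup\int\tfrac12|\grad\Qvec_j|^2\le\tfrac32\inf I=\tfrac32 I[\nvec^0]$ then upgrades weak to strong $W^{1,2}$ convergence.

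Finally (iv) is where the two-rate structure is exploited rather than fought: the displayed quantity in \eqref{eq:step1} is exactly $\tfrac1{t_j}f(\Qvec_j,t_j)$, whose second term is bounded by $h_+/t_j\to0$ uniformly on all of $\Omega$ (using $|\Qvec_j|\le1$). Thus (iv) reduces to showing $|\Qvec_j|\to1$ uniformly on compact $K\subset\overline\Omega\setminus\Sigma$, i.e.\ that the first term tends to $0$. I expect this, together with the $\eta$-regularity (clearing-out) lemma on which it rests, to be the main obstacle. The scheme is the familiar one: an $\eta$-regularity statement asserting that smallness of the scaled energy $\tfrac1R\int_{B_R}e(\Qvec_j,t_j)$ on a ball of radius $R\gtrsim\xi_b$ forces $|\Qvec_j|$ to be uniformly close to $1$ on $B_{R/2}$; the hypothesis is then verified on $K$ by combining the strong $W^{1,2}$ convergence from (i) (the smooth limit $\Qvec^0$ has small scaled Dirichlet energy at a small fixed scale on $K$) with the monotonicity formula (iii) to transfer this smallness to $\Qvec_j$ uniformly for large $j$.
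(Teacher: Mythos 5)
Your proposal is correct and follows essentially the same route as the paper, which itself only states that Proposition \ref{pr:before} is obtained by adapting the Ginzburg--Landau-type arguments of \cite[Lemmas 2 and 3; Props.~3, 4, and 6]{amaz}: maximum principle for the $L^\infty$ bound, rescaling to the length $\sqrt{\bar L/t_j}$ plus elliptic estimates for the gradient bound, the Pohozaev/monotonicity identity for (iii), energy comparison with the (uniaxial) limiting harmonic map plus lower semicontinuity and the Ball--Zarnescu lifting for (i), and a clearing-out lemma fed by strong $W^{1,2}$ convergence and monotonicity for (iv). Your observation that the second bulk term in \eqref{eq:step1} vanishes uniformly on all of $\Omega$ simply because its prefactor $h_+/t\to 0$ is exactly the point the authors make in their comment following Lemma \ref{le:bochner}.
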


However, this only ensures that $|\Qvec_j| \to 1$ uniformly
        as $j \to\infty$, away from $\Sigma$. 	We want to prove the following stronger result.
	\begin{proposition} \label{pr:uniC}
		Under the hypotheses of Proposition \ref{pr:before},
		$\{\vec Q_j\}_{j\in \N}$ converges uniformly to $\vec Q^0$ away from $\Sigma$,
		as $t_j\to\infty$.
	\end{proposition}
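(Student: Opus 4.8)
The plan is to upgrade the uniform convergence of the bulk potential density (Proposition~\ref{pr:before}(iv)) to uniform convergence of the maps themselves by a small-energy ($\eps$-regularity) argument, driven by the monotonicity formula of Proposition~\ref{pr:before}(iii) together with a Bochner-type differential inequality for the energy density $e(\Qvec_j,t_j)$ in \eqref{eq:maj6}. The scheme is: (a) a pointwise Bochner inequality for $e_j:=e(\Qvec_j,t_j)$; (b) localization of the scale-invariant energy to small balls away from $\Sigma$; (c) an $\eps$-regularity estimate yielding uniform gradient bounds; (d) Arzel\`a--Ascoli.

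First I would establish the Bochner inequality. Differentiating $e_j$ twice, invoking the Weitzenb\"ock/Bochner formula and substituting the Euler--Lagrange system \eqref{eq:maj13} to eliminate the second derivatives, one aims at a pointwise bound of the form
\begin{equation}
\label{eq:bochnerplan}
-\Delta e_j \leq C_0\, e_j^2 \qquad \text{on } \Omega,
\end{equation}
with $C_0$ independent of $j$ (established as Lemma~\ref{le:bochner}, adapting the vanishing-elastic-constant computation of \cite{amaz}). The delicate point --- and the step I expect to be the main obstacle --- is that here the bulk potential \eqref{eq:maj6b} carries two coefficients, $t_j$ and $h_+\sim\sqrt{t_j}$, which diverge at different rates; when one computes $\Delta f(\Qvec_j,t_j)$, the derivatives of $f$ produce cross terms of mixed order in $t_j$, and one must check, using $\|\Qvec_j\|_{L^\infty}\le 1$ from Proposition~\ref{pr:before}(ii) and the nonnegativity of the two potential contributions (cf.\ \eqref{eq:maj9}), that all such terms are absorbed into $C_0 e_j^2$ with a constant uniform in $j$. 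This is exactly where the two-rate subtlety forces case distinctions absent from the single-rate analysis.

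Next I would localize. Fix a compact $K\subset\overline\Omega\setminus\Sigma$ and let $d=\dist(K,\Sigma)>0$. Since $\Qvec^0$ is smooth on a neighbourhood of $K$, for any threshold $\eps_0>0$ there is $R\in(0,d/2)$ with $\frac1R\int_{B(\xvec,R)}\tfrac12|\grad\Qvec^0|^2\,dV\le CR^2<\eps_0/2$ for all $\xvec\in K$. By the strong $W^{1,2}$ convergence of Proposition~\ref{pr:before}(i) and the uniform decay of the potential density in Proposition~\ref{pr:before}(iv), it follows that $\frac1R\int_{B(\xvec,R)}e_j\,dV<\eps_0$ uniformly for $\xvec\in K$ once $j$ is large. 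The monotonicity formula of Proposition~\ref{pr:before}(iii) then propagates this smallness to every smaller scale: $\frac1r\int_{B(\xvec,r)}e_j\,dV<\eps_0$ for all $r\le R$.

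Finally I would run the $\eps$-regularity argument. Combining \eqref{eq:bochnerplan} with the scale-invariant smallness just obtained, a standard mean-value/contradiction argument yields
\begin{equation}
\label{eq:epsreg}
\sup_{B(\xvec,R/2)} e_j \leq \frac{C}{R^{3}}\int_{B(\xvec,R)} e_j\,dV \leq \frac{C\eps_0}{R^{2}},
\end{equation}
uniformly in $j$ and $\xvec\in K$. The mechanism is the usual one: set $g(\sigma)=(R-\sigma)^2\sup_{\overline{B_\sigma}}e_j$, take its maximum point $\xvec_0$ with value $e_0$, rescale $\tilde e(\yvec)=e_0^{-1}e_j(\xvec_0+e_0^{-1/2}\yvec)$, and use that on the rescaled ball $-\Delta\tilde e\le C_0(\sup\tilde e)\,\tilde e\le 4C_0\tilde e$ is a \emph{linear} subsolution, so the De Giorgi--Nash--Moser mean-value inequality gives $1=\tilde e(0)\le C\!\int_{B_1}\tilde e\le C\eps_0$, a contradiction for $\eps_0$ small. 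It is essential to treat $C_0 e_j^2$ as $C_0(\sup e_j)e_j$, linear in $e_j$, rather than as a bounded forcing term, so that no additive constant spoils the estimate. From \eqref{eq:epsreg} and $\tfrac12|\grad\Qvec_j|^2\le e_j$ one obtains $\|\grad\Qvec_j\|_{L^\infty(K)}\le C_K$ uniformly in $j$; together with $\|\Qvec_j\|_{L^\infty}\le1$ the family $\{\Qvec_j\}$ is equi-Lipschitz on $K$. Arzel\`a--Ascoli then yields a uniformly convergent subsequence whose limit must coincide with the $W^{1,2}$-limit $\Qvec^0$; since every subsequence has a further subsequence converging uniformly to $\Qvec^0$, the whole sequence converges uniformly to $\Qvec^0$ on $K$, and $K$ was an arbitrary compact subset of $\overline\Omega\setminus\Sigma$.
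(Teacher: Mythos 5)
Your proposal is correct and follows essentially the same route as the paper: the Bochner inequality of Lemma~\ref{le:bochner} (whose two-rate difficulty and resulting case analysis you correctly identify as the crux), combined with strong $W^{1,2}$ convergence, the smoothness of $\Qvec^0$ away from $\Sigma$ and the monotonicity formula to get small normalized energy on balls away from $\Sigma$, then the $\eps$-regularity estimate (the paper's Lemma~\ref{epsregularity}) and Arzel\`a--Ascoli. The only imprecision is that you assert the Bochner inequality on all of $\Omega$, whereas it is only available where $1-\eps_1\le|\Qvec_j|\le 1$; since you apply it only on compacta away from $\Sigma$, where $|\Qvec_j|\to 1$ uniformly by Proposition~\ref{pr:before}, this does not affect the argument.
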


	The key step is to prove a Bochner inequality of the form 
	\begin{lemma} \label{le:bochner}
		There exist $\eps_1>0$ and a constant $C>0$ independent of $t$
		such that if $\vec Q\in C^3(\Omega; S_0)$ is a solution of 
		\eqref{eq:maj13}
		then	
			\begin{equation}
		\label{eq:maj8} -\triangle e\left(\Qvec, t
		\right)\left(\xvec\right) \leq C e^2\left(\Qvec, t
		\right)\left(\xvec\right)
		\end{equation}
		for all $\vec x\in \Omega$ satisfying
\begin{equation}
\label{eq:maj10} 1- \eps_1 \leq |\Qvec(\vec x)| \leq 1.
\end{equation}
	\end{lemma}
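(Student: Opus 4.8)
The plan is to prove the pointwise inequality by combining a Bochner identity for $|\grad\Qvec|^2$ with the Euler--Lagrange system \eqref{eq:maj13}, and then controlling the one indefinite contribution that survives, namely the Hessian of the bulk potential contracted against $\grad\Qvec$. First I would record the Bochner identity: writing $\Gamma_{ij}=\triangle\Qvec_{ij}$ and differentiating $|\grad\Qvec|^2=\Qvec_{ij,k}\Qvec_{ij,k}$ twice gives $\tfrac12\triangle|\grad\Qvec|^2=|\grad^2\Qvec|^2+\Qvec_{ij,k}\partial_k\Gamma_{ij}$. Since $\Qvec$, hence $\grad\Qvec$, is symmetric and trace-free, contracting $\grad\Qvec$ against $\Gamma$ (the $S_0$-projection of the gradient of $f$) equals contracting against the full gradient, so $\Qvec_{ij,k}\partial_k\Gamma_{ij}=D^2f(\grad\Qvec,\grad\Qvec)$ and likewise $\triangle f=D^2f(\grad\Qvec,\grad\Qvec)+|\Gamma|^2$. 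Adding these and recalling $e=\tfrac12|\grad\Qvec|^2+f$ yields the clean decomposition
\[
-\triangle e(\Qvec,t)=-|\grad^2\Qvec|^2-|\Gamma|^2-2\,D^2 f(\grad\Qvec,\grad\Qvec).
\]
The first two terms are nonpositive, so everything reduces to dominating the positive part of $-2D^2 f(\grad\Qvec,\grad\Qvec)$ by $Ce^2$, using $-|\Gamma|^2$ and $-|\grad^2\Qvec|^2$ as reservoirs.

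Next I would unpack the Hessian. Writing $\bar L f=\tfrac t8(1-|\Qvec|^2)^2+\tfrac{h_+}8\,p$ with $p:=1+3|\Qvec|^4-4\sqrt6\tr\Qvec^3\ge 0$, a direct computation gives $D^2[(1-|\Qvec|^2)^2](\grad\Qvec,\grad\Qvec)=2|\grad|\Qvec|^2|^2-4(1-|\Qvec|^2)|\grad\Qvec|^2$. Splitting $\grad\Qvec$ into its component along $\Qvec$ and its component tangent to the sphere $\{|\Qvec|=\text{const}\}$, the term $-\tfrac{t}{2\bar L}|\grad|\Qvec|^2|^2$ coming out of the identity absorbs the radial part of $\tfrac{t}{\bar L}(1-|\Qvec|^2)|\grad\Qvec|^2$ and leaves a coefficient $\propto(1-3|\Qvec|^2)$, which is negative once $\eps_1$ is chosen so small that $|\Qvec|\ge1/\sqrt3$ on the region \eqref{eq:maj10}. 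Hence the only genuinely bad contribution from the first bulk term is the tangential one $\tfrac{t}{\bar L}(1-|\Qvec|^2)|(\grad\Qvec)^{\mathrm{tan}}|^2$. The second bulk term contributes the Hessian of a fixed polynomial whose negative part vanishes on the minimum manifold $\Qvec_{\min}$ and is bounded by $C\tfrac{h_+}{\bar L}\,\mathrm{dist}(\Qvec,\Qvec_{\min})\,|\grad\Qvec|^2\lesssim\tfrac{h_+}{\bar L}\sqrt p\,|\grad\Qvec|^2$.

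Both bad terms have the form (large prefactor)$\times$(deviation from $\Qvec_{\min}$)$\times|\grad\Qvec|^2$, and I would absorb each by Young's inequality into (a) a piece quadratic in the deviation and (b) a piece $\le C|\grad\Qvec|^4\le Ce^2$ (using $|\grad\Qvec|^2\le 2e$ and $|\Qvec|\ge1/\sqrt2$). The (a) pieces are $\lesssim\tfrac{t^2}{\bar L^2}(1-|\Qvec|^2)^2$ and $\lesssim\tfrac{h_+^2}{\bar L^2}p$, so to dispose of them it suffices to establish the coercivity estimate
\[
|\Gamma|^2\ \ge\ c\Big(\tfrac{t^2}{\bar L^2}(1-|\Qvec|^2)^2+\tfrac{h_+^2}{\bar L^2}\,p\Big)
\]
with $c$ independent of $t$, so that they are swallowed by $-|\Gamma|^2$; choosing the Young parameters with a little room and $\eps_1$ small then gives $-\triangle e\le Ce^2$ with a $t$-independent constant.

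The hardest part will be this two-scale coercivity of $|\Gamma|^2$---precisely the phenomenon, flagged in the introduction, that the two bulk terms diverge at the different rates $t$ and $h_+\sim\sqrt t$. Writing $\bar L\Gamma=-\tfrac t2(1-|\Qvec|^2)\Qvec+\tfrac{3h_+}2 R$, with $R$ the square-bracketed tensor in $\bar L\Gamma_{ij}$, the dangerous ingredient on expanding $|\Gamma|^2$ is the mixed term $-\tfrac{3th_+}{2\bar L^2}(1-|\Qvec|^2)(\Qvec\cdot R)$, which carries the intermediate weight $th_+$; using $\Qvec\cdot R=\tfrac14(|\Qvec|^4-1+p)$ one sees its leading part is in fact a positive multiple of $th_+(1-|\Qvec|^2)^2$ plus higher-order pieces. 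The clean way to close it is to read $|\Gamma|^2=|\nabla_{S_0}f|^2$ off the nondegeneracy of the minimum manifold: near $\Qvec_{\min}$ the transverse Hessian of $f$ is block-structured in the norm and biaxial normal directions, with diagonal stiffnesses of order $t$ and $h_+$ and off-diagonal coupling only of order $h_+$, which is dominated by the geometric mean $\sqrt{t\,h_+}$ of the diagonal entries, so the form stays uniformly positive definite and yields the displayed bound. Away from $\Qvec_{\min}$ (where $p$ is bounded below and the quadratic approximation is unavailable) the quantity $e$ is of order $h_+/\bar L$, so there the $h_+$ bad term is controlled directly by $Ce^2$; a decomposition of the region $\{1-\eps_1\le|\Qvec|\le1\}$ into a neighbourhood of $\Qvec_{\min}$ and its complement reconciles the two regimes. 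Verifying the nondegeneracy of $p$ transverse to $\Qvec_{\min}$ and tracking that every constant stays independent of $t$ is where I expect the real work to lie.
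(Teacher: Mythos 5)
Your proposal is essentially the paper's own argument: the same Bochner identity reduces everything to dominating $-2D^2f(\nabla\Qvec,\nabla\Qvec)$ by $|\Gamma|^2+Ce^2$, and your dichotomy (near $\Qvec_{\min}$, use a two-scale coercivity $|\Gamma|^2\gtrsim tf$ to swallow the quadratic deviations; away from $\Qvec_{\min}$, use $e\gtrsim h_+/\bar L$ to absorb the leftover $h_+^2$-terms) is exactly the paper's Case I versus Cases II--III, the only organizational differences being that the paper further splits the far regime according to the sign of $\tr\Qvec^3$ and imports the near-manifold coercivity as inequality \eqref{eq:verbatim1} from \cite{amaz} rather than re-deriving it from the block structure of the transverse Hessian. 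The step you flag as ``the real work'' --- verifying that the nondegeneracy constants stay $t$-independent on a fixed neighbourhood of $\Qvec_{\min}$ --- is precisely the content of that quoted lemma, and your remaining absorptions (Young's inequality on the tangential and mixed terms, $|\nabla\Qvec|^4\leq 4e^2$, and the observation that the dangerous $th_+$ cross term has a positive leading part) all have direct counterparts in the paper's computations \eqref{eq:maj15}--\eqref{eq:maj19}.
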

	
	This  inequality was proven in \cite[Lemmas 5--7]{amaz}
	in the case when $\Qvec_j$ is close to the manifold $\Qvec_{\min}$,
	defined in (\ref{bpnew}), which does not necessarily hold in our case as explained in detail after the proof of Lemma 3.3.

\begin{proof}[Proof of Lemma \ref{le:bochner}]
 The same proof of \cite[Lemma 5]{amaz} shows that there exists a positive constant $\eps_0>0$ such that:
 \begin{eqnarray}
\label{eq:verbatim1} \frac{t}{C}f(\Qvec, t) \leq
|\Gamma|^2(\Qvec, t) \leq C t f(\Qvec, t)
\end{eqnarray}
for all $\Qvec \in S_0$ such that $\left| \Qvec -
\sqrt{\frac{3}{2}} \left(\nvec \otimes \nvec -
\frac{\mathbf{I}}{3} \right) \right| \leq \eps_0$ for some $\nvec
\in \S^2$, the positive
constant $C$ being independent of $t$.
Let $\eps_1$ be a
 positive constant (depending only on $C$ and $\eps_0$ above) such that
		\begin{equation}
\label{eq:maj10b}
0\leq |\Qvec|^3 - \sqrt{6}\tr\Qvec^3 \leq \epsilon_1
\end{equation}
and \eqref{eq:maj10}
collectively ensure that $\left| \Qvec -
\sqrt{\frac{3}{2}} \left(\nvec \otimes \nvec -
\frac{\mathbf{I}}{3} \right) \right| \leq \eps_0$
for some $\vec n\in \S^2$.

	Such an $\eps_1$ exists because
	the biaxiality parameter (see \eqref{eq:maj9})
	\begin{align}
		\beta^2(\vec Q) &=
		 \frac{|\vec Q|^3-\sqrt{6}\tr \vec Q^3}{|\vec Q|^3}
		 \frac{|\vec Q|^3+\sqrt{6}\tr \vec Q^3}{|\vec Q|^3}
		 \\ &\leq \frac{|\vec Q|^3-\sqrt{6}\tr \vec Q^3}{(1-\eps_1)^3}
		 \cdot \frac{|\vec Q|^3 + |\vec Q|^3}{|\vec Q|^3}
		\leq \frac{\eps_1}{(1-\eps_1)^3}\cdot 2
		\overset{\eps_1\to 0}{\longrightarrow} 0.
	\end{align}


The quantity $ \left|\Qvec \right|^3 -
\sqrt{6} \textrm{tr}\Qvec^3$ plays an important role in
the following proof and we note the following elementary
inequality
\begin{equation}
\label{eq:maj11} 0 \leq \left( \left|\Qvec \right|^3 - \sqrt{6}
\textrm{tr}\Qvec^3 \right) \leq \frac{\left( 3 - \sgn
\textrm{tr}\Qvec^3 \right)}{2} |\Qvec|^3.
\end{equation}

As in \cite{amaz}, we use the Euler-Lagrange equations (\ref{eq:maj13}) to derive the following inequality:
\begin{equation}
\label{eq:maj14} -\triangle e\left(\Qvec, t \right) +
|\Gamma|^2 \leq -
 2\frac{\partial^2 f}{\partial
\Qvec_{ij}\partial \Qvec_{pq}}\Qvec_{ij,k} \Qvec_{pq,k}.
\end{equation}
Moreover, we have
\begin{eqnarray}
\label{eq:maj15}&& \bar{L}^2|\Gamma|^2 =
\frac{t^2}{4}|\Qvec|^2\left(1 - |\Qvec|^2 \right)^2 + \frac{9
h_+^2}{4}(1 - |\Qvec|)^2 |\Qvec|^4 + \nonumber \\ && + \frac{3 h_+
t}{2}(1 - |\Qvec|^2)(|\Qvec|^3 - |\Qvec|^4) +\frac{3 h_+
t}{2}(|\Qvec|^2 - 1)(|\Qvec|^3 - \sqrt{6}\textrm{tr}\Qvec^3) +
\nonumber \\ && + \frac{9h_+^2}{2}(|\Qvec|^3 -
\sqrt{6}\textrm{tr}\Qvec^3)|\Qvec|^2
\end{eqnarray}
and
\begin{eqnarray}
\label{eq:maj16} &&- \bar{L}\frac{\partial^2 f}{\partial
\Qvec_{ij}\partial\Qvec_{pq}}\Qvec_{ij,k} \Qvec_{pq,k} =
\frac{t}{2}|\grad\Qvec|^2(1 - |\Qvec|^2)
-t(\Qvec\cdot\grad\Qvec)^2 - \nonumber \\ &&
- 3 h_+\left(\Qvec\cdot \grad \Qvec \right)^2 - \frac{3 h_+}{2}\left|\Qvec \right|^2 \left|\grad \Qvec \right|^2 + 3\sqrt{6} h_+ \Qvec_{\beta j}\Qvec_{\alpha
j,k}\Qvec_{\alpha\beta,k} .
\end{eqnarray}
We consider three separate cases according to the sign of
$\textrm{tr}\Qvec^3$ and the magnitude of $\left|\Qvec
\right|^3 - \sqrt{6}\textrm{tr}\Qvec^3$.

\textbf{Case I:} $0\leq \left|\Qvec \right|^3 -
\sqrt{6}\textrm{tr}\Qvec^3  \leq \eps_1$. This, when combined with
(\ref{eq:maj10}), implies that $\textrm{tr}\Qvec^3>0$ and that  $\left| \Qvec -
\sqrt{\frac{3}{2}} \left(\nvec \otimes \nvec -
\frac{\mathbf{I}}{3} \right) \right| \leq \eps_0$ for some $\nvec\in S^2$ 
(by definition of $\eps_1$).
In this case, we can repeat
all the arguments in \cite[Lemmas $5$-$7$]{amaz};
we state the key steps for completeness. 

We start with inequality (\ref{eq:verbatim1}) above.
 We denote the eigenvectors of $\Qvec$ by $\nvec_1,
\nvec_2, \nvec_3$ respectively and let $\la_3>0$ and $\la_1,
\la_2$ denote the corresponding eigenvalues. Define
$$ \Qvec^* = \sqrt{\frac{2}{3}}\nvec_3 \otimes \nvec_3 - \sqrt{\frac{1}{6}}\left(\nvec_1 \otimes \nvec_1 + \nvec_2\otimes \nvec_2 \right). $$
From the inequality $\left| \Qvec -
\sqrt{\frac{3}{2}}\left(\nvec \otimes \nvec - \frac{\mathbf{I}}{3}
\right) \right| \leq \eps_0$ with $\nvec=\nvec_3$, we necessarily
have that
$$ \left(\la_1 + \sqrt{\frac{1}{6}}\right)^2 + \left(\la_2 + \sqrt{\frac{1}{6}}\right)^2 + \left(\la_3 - \sqrt{\frac{2}{3}}\right)^2 \leq \eps_0^2. $$

The proof of the Bochner inequality now follows from the chain of inequalities below:
\begin{eqnarray}
\label{eq:verbatim2} &&-\triangle e(\Qvec, t) + |\Gamma|^2
\leq -
 2\frac{\partial^2 f}{\partial
\Qvec_{ij}\partial \Qvec_{pq}}\Qvec_{ij,k}
\Qvec_{pq,k} \leq \nonumber \\ &&
\leq \delta \sum_{i,j,m=1}^{3}
\left( \frac{\partial^3 f}{\partial \Qvec_{ij}\Qvec_{pq}\partial \Qvec_{mn} }
\left(\Qvec^*\right) \right)^2 \left(\Qvec - \Qvec^*\right)^2 
\\ &&  \hspace{5em}+
\delta \sum_{i,j,m=1}^{3} \left(\mathcal{R}^{ijmn} \right)^2 \left(\Qvec, \Qvec^* \right) + \frac{1}{\delta}|\grad \Qvec|^4 \leq \nonumber \\
&& \leq C_1 \delta t^2 \left| \Qvec - \Qvec^* \right|^2 +
\frac{1}{\delta}|\grad \Qvec|^4 \leq \nonumber \\ && \leq C_2
\delta t f(\Qvec, t) + \frac{1}{\delta}|\grad \Qvec|^4 .
\end{eqnarray}
In Equation~(\ref{eq:verbatim2}) above, we have carried out a
Taylor series expansion of the right-hand side of (\ref{eq:maj14})
about $\Qvec^*$, $\left(\mathcal{R}^{ijmn} \right)$ is the
remainder term in the Taylor series expansion which is
well-controlled and the constants $C_1$ and $C_2$ are independent
of $t$ but dependent on $\bar{L}$ (which does not matter since
$\bar{L}$ is fixed). For $\delta$ sufficiently small, we can
absorb the $ C_2 \delta t f(\Qvec, t)$-term on the right by the
$\left|\Gamma \right|^2(\Qvec)$-contribution on the left so that
$$ \left|\Gamma \right|^2(\Qvec_j) - C_2 \delta t f(\Qvec, t) \geq 0$$ for
$\delta$ sufficiently small (from (\ref{eq:verbatim1})), yielding the Bochner inequality
$$ -\triangle e(\Qvec, t)  \leq \frac{1}{\delta}|\grad \Qvec|^4$$
for $\delta>0$ independent of $t$, as required. \vspace{.5 cm}

\textbf{Case II:} $\textrm{tr}\Qvec^3>0$ and $\eps_1
<\left|\Qvec \right|^3 - \sqrt{6}\textrm{tr}\Qvec^3  \leq 1 $.

We refer to the relations (\ref{eq:maj14})-(\ref{eq:maj16}) and use the Cauchy-Schwarz inequality in (\ref{eq:maj15}) to see
that
$$ \frac{3 h_+ t}{2}(|\Qvec|^2 - 1)(|\Qvec|^3 -
\sqrt{6}\textrm{tr}\Qvec^3) \geq - \delta t^2(|\Qvec|^2 - 1)^2
- \frac{9 h_+^2}{16}\frac{1}{\delta}(|\Qvec|^3 -
\sqrt{6}\textrm{tr}\Qvec^3)^2. $$ For $\frac{3}{16} < \delta <
\frac{1-2\eps_1}{4}$ and $\eps_1$ chosen as above, we
have
\begin{eqnarray}
\label{eq:maj17} && \bar{L}^2|\Gamma|^2 \geq \alpha t^2
|\Qvec|^2(1 - |\Qvec|)^2 + \frac{9 h_+^2}{4}(1 -
|\Qvec|)^2|\Qvec|^4 + \nonumber
\\ && + \frac{3 h_+ t}{2} |\Qvec|^3 (1 - |\Qvec|)^2 (1 + |\Qvec|)
+ \eta h_+^2(|\Qvec|^3 - \sqrt{6}\textrm{tr}\Qvec^3)
\end{eqnarray}
for positive constants $\alpha, \eta$ independent of $t$ and $\bar{L}$ is fixed for our purposes.
Finally, we appeal to (\ref{eq:maj16}) to obtain
\begin{eqnarray}
\label{eq:maj18} && - \bar{L}\frac{\partial^2 f}{\partial
\Qvec_{ij}\partial \Qvec_{pq}}\Qvec_{ij,k} \Qvec_{pq,k}\leq \delta_1 \frac{t^2}{4}(1 - |\Qvec|^2)^2
 + \nonumber \\ && + \delta_2
h_+^2 |\Qvec|^2 + \frac{1}{\delta_5(\delta_1, \delta_2)}|\grad
\Qvec|^4.
\end{eqnarray} For $\delta_2$ sufficiently small, we can absorb
the $h_+^2|\Qvec|^2$ term in (\ref{eq:maj18}) by the $\eta
h_+^2(|\Qvec|^3 - \sqrt{6}\textrm{tr}\Qvec^3)$ term in
(\ref{eq:maj17}). Choosing $\delta_1,\delta_2$ small enough (and
independent of $t$), recalling
(\ref{eq:maj14}), the lower bound (\ref{eq:maj17}) and the upper
bound (\ref{eq:maj18}), we have
\begin{eqnarray}
\label{eq:maj19} -\triangle e\left(\Qvec, t \right) \leq
\frac{1}{\delta_5} |\grad \Qvec|^4
\end{eqnarray}
which is precisely the Bochner inequality.

\textbf{Case III:} $\textrm{tr} \Qvec^3 \leq 0$ so that $ \left(1-
\eps_1\right)^3 \leq \left|\Qvec \right|^3 - \sqrt{6}\textrm{tr}\Qvec^3
\leq 2 |\Qvec|^3$.

A large part of the computations for Case II carry over to Case III. In particular, (\ref{eq:maj18}) is unchanged and it remains to note that for $\textrm{tr}\Qvec^3 < 0$, the bulk potential $\bar{L}f(\Qvec,t) \geq \frac{t}{8}\left(1 - |\Qvec|^2 \right)^2 + \frac{h_+}{8}$.
In particular,
\begin{equation}
\label{eq:maj20} \frac{h_+^2}{64{\bar L}^2} \leq e^2 (\Qvec,t).
\end{equation}

Define $\sigma$ and $\gamma$ to be
\begin{eqnarray}
&& 1 - |\Qvec|^2 = \sigma \frac{h_+}{t} \nonumber
\\ && \gamma = |\Qvec|^3 - \sqrt{6} \textrm{tr}\Qvec^3
\label{eq:maj22}
\end{eqnarray}
where $\textrm{tr}\Qvec^3 \leq 0$ by assumption. 
The second, third and fifth terms in \eqref{eq:maj15} are positive, hence
\begin{eqnarray}
\label{eq:maj25}
&& {\bar L}^2 |\Gamma|^2
        \geq
\frac{|\vec Q|^2}{4} \sigma^2 h_+^2 - \frac{3h_+^2}{2}\sigma \gamma \geq \nonumber \\ && 
        \geq
\frac{h_+^2}{8} (\sigma^2-12\sigma \gamma)
        \geq
\frac{h_+^2}{8} ( (\sigma - 6\gamma)^2 - 36 \gamma^2)
        \geq
-\frac{9h_+^2}{2} \gamma^2.
\end{eqnarray}

Since $\gamma = |\vec Q|^3 - \sqrt{6}\tr \vec Q^3  \leq 2$, 
we get ${\bar L}^2 |\Gamma|^2 \geq -18h_+^2$ and therefore,  the Bochner inequality (\ref{eq:maj8})
then follows from (\ref{eq:maj14}), (\ref{eq:maj25}), (\ref{eq:maj20}).

\end{proof}

%
\textbf{Comment:}  The Bochner-inequality for the Landau-de Gennes
energy density was derived in \cite{amaz}, for global LdG
minimizers, away from the singular set, $\Sigma$, of a limiting
harmonic map, in the vanishing elastic constant limit i.e. in the
$L \to 0$ limit. There is an important mathematical difference
between the $L \to 0$ limit and the low-temperature limit ($A \to
-\infty$) considered in our manuscript. As $L \to 0$, we can use
the monotonicity formula  \cite[Lemma $2$ and Proposition~$4$]{amaz} to deduce that a global LdG minimizer, denoted by
$\Qvec_L$, is ``almost'' uniaxial with unit norm away
from $\Sigma$ as $L \to 0$ i.e.\ they are close to the uniaxial
manifold $\left\{ \sqrt{\frac{3}{2}}\left(\mvec\otimes \mvec -
\frac{\mathbf{I}}{3}\right) \right\}$ for some arbitrary $\mvec \in \S^2$. In \cite{amaz}, the authors use this proximity to the
uniaxial manifold, away from $\Sigma$, to derive the
Bochner-inequality which, in turn, yields uniform convergence to a
(minimizing) limiting harmonic map, away from $\Sigma$.

In the low-temperature limit ($A \to -\infty$), we can only prove that a
global LdG minimizer, denoted by $\Qvec_A$, satisfies $\left|
\Qvec_A \right| \to 1$ uniformly away from $\Sigma$, without any
information about uniaxiality or biaxiality (since the prefactor $\frac{h_+}{t}$ of the second term in \eqref{eq:step1}
vanishes as $t\to\infty$).
 This is weaker
information than what is available as $L \to 0$. We use the
information about $|\Qvec_A|$ as $A \to -\infty$ to derive the
Bochner inequality and this requires us to
consider three separate cases, depending on $\textrm{tr}\Qvec_A^3$
and the degree of biaxiality. Once we derive the Bochner inequality for the energy density, we can prove uniform convergence of a sequence of global energy minimizers to a limiting minimizing harmonic map, away from $\Sigma$.

From the maximum principle (see \cite{ejam})
and the uniform convergence $|\vec Q_j|\to 1$ away from the singularities of $\vec Q^0$ (see Proposition~\ref{pr:before}),
we see that (\ref{eq:maj10}) is satisfied for all $t$ sufficiently large, 
so we obtain Bochner's inequality away from $\Sigma$ for large $t$.
This enables us to deduce the following  
$\epsilon$-regularity property,
exactly as in \cite[Lemma 7]{amaz}:

\begin{lemma}
\label{epsregularity}
 Let $K\subset \Omega$ be a compact subset that does not contain any singularity of $\Qvec^0$. Then there exist $j_0$ and constants $C_1, C_2>0$ (independent of $j$) so that if for $\mathbf{a} \in K$ and $0< r <\dist\left(\vec a, \partial K \right)$, we have
\begin{equation}
\label{eq:smallenergy1}
\frac{1}{r} \int_{B(\vec a,r)}e\left(\Qvec_j, t_j \right)~dV \leq C_1,
\end{equation} then
\begin{equation}
\label{eq:smallenergy2}
 r^2 \sup_{B(\vec a, r/2)} e\left(\Qvec_j, t_j \right) \leq C_2
 \end{equation}
for all $j\geq j_0$.
\end{lemma}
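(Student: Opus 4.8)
The plan is to reproduce the Schoen-type argument of \cite[Lemma 7]{amaz}, whose three driving ingredients are all now available: the Bochner inequality of Lemma \ref{le:bochner}, the monotonicity formula of Proposition \ref{pr:before}(iii), and the interior sub-mean-value inequality for nonnegative subsolutions of a linear elliptic inequality. First I would fix $j_0$ so that for all $j\ge j_0$ the pointwise bound (\ref{eq:maj10}), namely $1-\eps_1\le|\Qvec_j|\le1$, holds throughout a neighborhood of $K$; this is exactly what the maximum principle together with the uniform convergence $|\Qvec_j|\to1$ away from $\Sigma$ (Proposition \ref{pr:before}) secures on compacts disjoint from $\Sigma$. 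With (\ref{eq:maj10}) in force, Lemma \ref{le:bochner} supplies $-\triangle e(\Qvec_j,t_j)\le C\,e(\Qvec_j,t_j)^2$ at every point of (a neighborhood of) $K$, with $C$ independent of $j$.

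The second step propagates the smallness hypothesis to all smaller scales and nearby centers. For $\vec b$ in a fixed fraction of $B(\vec a,r)$ and $\rho$ below a comparable fraction of $r$, one has $B(\vec b,\rho)\subset B(\vec a,r)$, so monotonicity centered at $\vec b$ gives $\frac1\rho\int_{B(\vec b,\rho)}e(\Qvec_j,t_j)\le \frac{C}{r}\int_{B(\vec a,r)}e(\Qvec_j,t_j)\le C\,C_1$; that is, the scaled energy is uniformly small at all such centers and scales once $C_1$ is small. The key feature is that the comparison keeps the $\tfrac1r$ normalization of the hypothesis, so no blow-up in the fractions occurs.

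The heart of the argument is the maximum-function device. Writing $F(\sigma)=(\tau-\sigma)^2\sup_{\overline{B(\vec a,\sigma)}}e(\Qvec_j,t_j)$ on $[0,\tau)$, with $\tau$ a fixed fraction of $r$, I choose a maximizer $\sigma_0$, a point $\vec x_0$ realizing the inner supremum, and set $e_0=e(\Qvec_j,t_j)(\vec x_0)$ and $\rho_0=(\tau-\sigma_0)/2$; maximality forces $\sup_{B(\vec x_0,\rho_0)}e\le 4e_0$, and the conclusion reduces to bounding $m:=e_0\rho_0^2$. If $m\ge1$, I rescale via $\vec y\mapsto\vec x_0+e_0^{-1/2}\vec y$; the density $\tilde e(\vec y)=e_0^{-1}e(\Qvec_j,t_j)(\vec x_0+e_0^{-1/2}\vec y)$ then satisfies $\tilde e(0)=1$, $\tilde e\le4$, and, by scale-invariance of the Bochner inequality in this normalization, $-\triangle\tilde e\le C\tilde e^2\le4C\tilde e$ on $B(0,m^{1/2})\supset B(0,1)$. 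The sub-mean-value inequality for this bounded subsolution yields $1=\tilde e(0)\le C_3\int_{B(0,1)}\tilde e$, which upon undoing the scaling becomes the lower bound $\frac{1}{e_0^{-1/2}}\int_{B(\vec x_0,e_0^{-1/2})}e(\Qvec_j,t_j)\ge 1/C_3$ on the scaled energy at radius $e_0^{-1/2}\le\rho_0$ about $\vec x_0$. Since this center and radius lie in the range covered by the second step, this contradicts the uniform smallness $C\,C_1$ as soon as $C_1<1/(C\,C_3)$. Hence $m<1$, and evaluating $F$ at the appropriate intermediate radius converts the bound on $m$ into $r^2\sup_{B(\vec a,r/2)}e(\Qvec_j,t_j)\le C_2$.

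I expect the only delicate points to be bookkeeping ones: selecting the fraction $\tau$, the admissible centers, and the scales so that every invoked ball (in particular $B(\vec x_0,e_0^{-1/2})$ after rescaling) genuinely lies inside $B(\vec a,r)\subset\Omega$ and inside the region where (\ref{eq:maj10}) guarantees the Bochner inequality, together with calibrating $C_1$ against the sub-mean-value constant $C_3$ and adjusting the fractions so that the final supremum is over $B(\vec a,r/2)$ rather than a smaller concentric ball. All of these are precisely the technical matters already handled in \cite[Lemma 7]{amaz}; the only genuinely new input is that Lemma \ref{le:bochner} now supplies the Bochner inequality under the weaker hypothesis (\ref{eq:maj10}) in place of proximity to $\Qvec_{\min}$, and this is exactly what the maximum-principle step provides for $j\ge j_0$.
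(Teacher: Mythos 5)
Your proposal is correct and follows essentially the same route as the paper: the paper itself only observes that the maximum principle plus the uniform convergence $|\Qvec_j|\to 1$ away from $\Sigma$ secure \eqref{eq:maj10} on $K$ for large $j$, hence the Bochner inequality of Lemma \ref{le:bochner}, and then defers the $\eps$-regularity to the Schoen-type argument of \cite[Lemma 7]{amaz}, which is precisely the maximum-function/rescaling/sub-mean-value scheme you reconstruct. The details you fill in (monotonicity to propagate smallness, the contradiction when $e_0\rho_0^2\ge 1$, and the calibration of $C_1$ against the sub-mean-value constant) are the standard ones and are sound.
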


\begin{proof}[Proof of Proposition \ref{pr:uniC}]
The normalized energy, $\frac{1}{r} \int_{B(a,r)}e\left(\Qvec_j,
t_j \right)~dV$, 
can be controlled away from $\Sigma$, by simply (i) using the
strong convergence of the sequence, $\left\{\Qvec_j \right\}$ to
$\Qvec^0$ as $j \to \infty$ in $W^{1,2}$ and (ii) the fact that
$|\grad \Qvec^0|$ is bounded away from $\Sigma$, independently of
$t_j$. Thus, the uniform convergence, $\Qvec_j \to \Qvec^0$, away from
$\Sigma$ as $j \to \infty$,  follows immediately from Lemma \ref{epsregularity}, combining
(\ref{eq:smallenergy1}) and (\ref{eq:smallenergy2}) and Ascoli-Arzel\'a Theorem.
\end{proof}

We are almost ready to prove the main theorems. It only remains 
to state an elementary result from homotopy theory
and to recall that the Landau-de Gennes energy functional has 
a Ginzburg-Landau like structure by blowing-up at scale $t^{-1/2}$ and working in the $t\to \infty$ 
limit.

\begin{lemma} \label{le:homotopy}
	Let $\vec Q^*(\vec x):= \displaystyle \sqrt{\frac{3}{2}} \left ( \vec n^*(\vec x) \otimes \vec n^*(\vec x)
		-\frac{\vec I}{3}\right)$
	for some 
		$\vec n^*\in C(\partial B; \S^2)$,
		where 	
		$B$ is a ball $B(\vec a, \eps)\subset \R^3$.
	Suppose that $\vec Q^*$ is homotopic in $C(\partial B; \vec Q_{\min})$
	(see \eqref{bpnew}) to $\vec Q|_{\partial B}$
	for some $\vec Q\in C(\overline{B}; \vec Q_{\min})$.
	Then $\deg \vec n^*=0$.
\end{lemma}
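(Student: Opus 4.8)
The plan is to reduce the statement to the theory of covering spaces, exploiting that the target manifold $\vec Q_{\min}$ is a real projective plane. Define $p:\S^2 \to \vec Q_{\min}$ by $p(\vec m) = \sqrt{\frac{3}{2}}\left(\vec m \otimes \vec m - \frac{\vec I}{3}\right)$; since $p(\vec m) = p(-\vec m)$ and $p$ is two-to-one, it is the standard realization of $\vec Q_{\min} \cong \mathbb{RP}^2$ as the quotient of $\S^2$ by the antipodal map, so $p$ is a two-sheeted covering with simply connected total space $\S^2$. By definition $\vec Q^* = p \circ \vec n^*$, i.e.\ $\vec n^*$ is a lift of $\vec Q^*$ through $p$; and since $\partial B$ is homeomorphic to $\S^2$, every map in sight is a continuous map $\S^2 \to \mathbb{RP}^2$ or $\S^2 \to \S^2$.

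First I would show that the hypothesis forces $\vec Q^*$ to be (freely) null-homotopic in $C(\partial B; \vec Q_{\min})$. The map $\vec Q|_{\partial B}$ is the boundary restriction of $\vec Q \in C(\overline B; \vec Q_{\min})$, which is defined on the whole ball. Contracting $\overline B$ to its centre $\vec a$ and composing with $\vec Q$ yields a homotopy, taking values in $\vec Q_{\min}$, from $\vec Q|_{\partial B}$ to the constant map $\vec Q(\vec a)$; hence $\vec Q|_{\partial B}$ is null-homotopic, and by the assumed homotopy $\vec Q^* \simeq \vec Q|_{\partial B}$ so is $\vec Q^*$.

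Next I would lift this null-homotopy. Let $H:\partial B \times [0,1] \to \vec Q_{\min}$ satisfy $H(\cdot,0) = \vec Q^*$ and $H(\cdot,1) \equiv \vec Q_0$ for some fixed $\vec Q_0 \in \vec Q_{\min}$. By the homotopy lifting property of the covering $p$, starting from the lift $\vec n^*$ of $H(\cdot,0)$, there is a continuous $\tilde H:\partial B \times [0,1] \to \S^2$ with $p \circ \tilde H = H$ and $\tilde H(\cdot,0) = \vec n^*$. The terminal map $\tilde H(\cdot,1)$ is a lift of the constant $\vec Q_0$, hence takes values in the two-point fibre $p^{-1}(\vec Q_0) = \{\vec m_0, -\vec m_0\}$; since $\partial B$ is connected, $\tilde H(\cdot,1)$ is itself constant. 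Thus $\tilde H$ is a homotopy in $C(\partial B; \S^2)$ from $\vec n^*$ to a constant map, so $\deg \vec n^* = 0$ because the Brouwer degree is a homotopy invariant vanishing on constant maps.

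The one delicate point, and the step I would be most careful about, is the application of the homotopy lifting property together with the connectedness argument that forces the terminal lift to be constant rather than merely fibrewise discrete. This is where the topology of $\mathbb{RP}^2$ enters: the deck group of $p$ is $\{\pm\id\}$ acting by the (degree $-1$) antipodal map, so at the level of homotopy the argument reflects the standard isomorphism $\pi_2(\mathbb{RP}^2) \cong \pi_2(\S^2) \cong \Z$, under which the free homotopy class of $\vec Q^*$ is graded by $|\deg \vec n^*|$ and vanishes precisely when $\deg \vec n^* = 0$.
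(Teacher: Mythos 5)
Your proof is correct and follows essentially the same route as the paper: contract $\vec Q|_{\partial B}$ to the constant $\vec Q(\vec a)$ using the interior extension, transfer the null-homotopy to $\vec Q^*$, lift it through the double cover $\S^2 \to \vec Q_{\min}\cong \R P^2$ starting from the given lift $\vec n^*$, and conclude $\deg \vec n^*=0$ by homotopy invariance of the degree. Your write-up merely makes explicit the homotopy lifting step and the connectedness argument forcing the terminal lift to be constant, which the paper leaves implicit.
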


\begin{proof}
	Since $\vec Q|_{\partial B}$ has a continuous $\vec Q_{\min}$-valued
	extension inside $\overline B$, 
	it is homotopic in $C(\partial B; \vec Q_{\min})$ to the constant
	tensor $\vec Q(\vec a)$.
	Hence, combining the two homotopies, we deduce that $\Qvec^*$ is homotopic to a constant in 
	$C(\partial B; \vec Q_{\min})$.
	  
	Since $\partial B$ is simply-connected
	and $\S^2$ is a universal cover of $\vec Q_{\min}\cong \R P^2$,
	the latter homotopy lifts to $\S^2$, implying that $\vec n^* $ is homotopic to a constant in $C(\partial B;\S^2)$ and hence, $\deg \vec n^*=0$, as needed.	
\end{proof}

\begin{lemma} \label{le:blowup}
	Let $t_j\to +\infty$ and, for each $j\in \N$, let $\vec Q_j\in \bar{\Acal}_A$ be a classical solution 
		of \eqref{eq:maj13}.
	Suppose that $\vec Q_j$ converges strongly in $W^{1,2}$ 
	to a minimizing limiting harmonic map $\vec Q^0$.
	Let $\vec x_j^*$ be a sequence of points converging to some $\vec x^*$
	in the singular set $\Sigma$ of $\vec Q^0$.
	Then (up to a subsequence) the rescaled maps 
	    \begin{align} \label{eq:def-blowup}
        \xi_j:= \sqrt\frac{\bar L}{t_j},
        \qquad
        \vtilde x:= \frac{\vec x-\vec x_j^*}{\xi_j},
        \qquad
        \vtilde Q_j(\vtilde x):= \vec Q_j(\vec x_j^{*} + \xi_j \vtilde x)
    \end{align}
    	converge in $C^k_{\loc}(\R^3;S_0)$ for all $k\in \N$
	to a smooth solution of the Ginzburg-Landau equations
	$\Delta \vtilde Q= (|\vtilde Q|^2-1)\vtilde Q$, in $\R^3$, which satisfies
	the energy bound
        \begin{align}\label{eq:relevantenergy}
        \frac{1}{R}\int_{|\vtilde x|<R}
            \frac{1}{2}|\nabla {\vtilde Q}^\infty(\vtilde x)|^2
            + \frac{(1-|{\vtilde Q}^\infty|^2)^2}{8}~\dd V  \leq 12\pi\quad \forall\,R>0.
    \end{align}
\end{lemma}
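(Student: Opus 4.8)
The plan is to combine the uniform estimates of Proposition \ref{pr:before} with the monotonicity formula and the classification of tangent maps of minimizing harmonic maps. First I would record the rescaled equation: inserting \eqref{eq:def-blowup} into the Euler--Lagrange system \eqref{eq:maj13} and using $\bar L/\xi_j^2 = t_j$, one finds
\[
\Delta_{\vtilde x}\vtilde Q_j = \tfrac12\, \vtilde Q_j\big(|\vtilde Q_j|^2 - 1\big) + \frac{3h_+}{2t_j}\Big(|\vtilde Q_j|^2 \vtilde Q_j - \sqrt6\, \vtilde Q_j^2 + \tfrac{\sqrt6}{3}|\vtilde Q_j|^2 \vec I\Big),
\]
so that the biaxial correction carries the prefactor $h_+/t_j$, which tends to $0$ since $h_+\sim\sqrt{t/2}$ as $t\to\infty$. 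Thus only the Ginzburg--Landau nonlinearity is expected to survive in the limit.

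Second, compactness. Proposition \ref{pr:before}(ii) gives $\|\vec Q_j\|_{L^\infty}\le 1$ and $\|\nabla \vec Q_j\|_{L^\infty}\le C\sqrt{t_j/\bar L}$; rescaling, $\|\nabla_{\vtilde x}\vtilde Q_j\|_{L^\infty} = \xi_j\|\nabla \vec Q_j\|_{L^\infty}\le C$, so the $\vtilde Q_j$ are uniformly bounded and uniformly Lipschitz. Since $\vec x_j^*\to\vec x^*$ lies in the interior of $\Omega$ and $\xi_j\to0$, the rescaled domains exhaust $\R^3$, so $\vtilde Q_j$ is eventually defined on every compact set, and Ascoli--Arzel\`a yields a locally uniform limit $\vtilde Q^\infty$ along a subsequence. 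Because the right-hand side of the rescaled equation is bounded in $L^\infty_{\loc}$ uniformly in $j$ (every term is polynomial in the bounded quantity $\vtilde Q_j$, and the biaxial prefactor is bounded), interior $W^{2,p}$ estimates followed by Schauder estimates and a standard bootstrap give uniform $C^{k}_{\loc}$ bounds for all $k$. Hence the convergence is in $C^k_{\loc}(\R^3;S_0)$, and passing to the limit in the rescaled equation (the biaxial term vanishing) shows that $\vtilde Q^\infty$ solves the Ginzburg--Landau equation on all of $\R^3$.

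Third, the energy bound, which I expect to be the heart of the matter. A direct change of variables shows that for every $R>0$ and $j$ large enough that $\xi_j R<\rho$,
\[
\frac1R\int_{|\vtilde x|<R}\Big(\tfrac12|\nabla \vtilde Q_j|^2 + \tfrac18(1-|\vtilde Q_j|^2)^2 + \tfrac{h_+}{8t_j}(\cdots)\Big)\dd V = \frac{1}{\xi_j R}\int_{B(\vec x_j^*,\xi_j R)} e(\vec Q_j,t_j)\dd V,
\]
i.e.\ the blow-up energy on $B_R$ equals the scale-invariant LdG energy at radius $\xi_j R$ about $\vec x_j^*$. By the monotonicity formula Proposition \ref{pr:before}(\ref{it:monotone}) the latter is bounded above by $\rho^{-1}\int_{B(\vec x_j^*,\rho)} e(\vec Q_j,t_j)\dd V$ for any fixed small $\rho>0$. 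I then let $j\to\infty$: on the left the integrand converges in $C^0(\overline{B_R})$ and the biaxial term (prefactor $h_+/t_j\to0$, integrand bounded) drops out; on the right, the strong $W^{1,2}$ convergence $\vec Q_j\to\vec Q^0$ of Proposition \ref{pr:before}(i) controls the Dirichlet part, while the energy bound \eqref{eq:MAIN} together with $W^{1,2}$ lower semicontinuity forces $\int_\Omega f(\vec Q_j,t_j)\dd V\to0$, hence also on $B(\vec x^*,\rho)$. This yields
\[
\frac1R\int_{|\vtilde x|<R}\Big(\tfrac12|\nabla \vtilde Q^\infty|^2 + \tfrac18(1-|\vtilde Q^\infty|^2)^2\Big) \dd V \le \frac{1}{\rho}\int_{B(\vec x^*,\rho)}\tfrac12|\nabla \vec Q^0|^2\dd V =: \Phi(\rho)
\]
for every $\rho>0$.

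Finally I would let $\rho\to0^+$. Since $\vec Q^0$ is a minimizing (hence stationary) harmonic map, Price's monotonicity makes $\Phi(\rho)$ nondecreasing, so $\inf_{\rho>0}\Phi(\rho)=\lim_{\rho\to0^+}\Phi(\rho)$ equals the density $\Theta(\vec x^*)$ of $\vec Q^0$ at the singular point. By the classification of tangent maps of minimizing harmonic maps into $\S^2$ (\cite{bcl,schoen}), this tangent map is, up to a rotation, $\sqrt{3/2}\big(\tfrac{\vec x}{|\vec x|}\otimes\tfrac{\vec x}{|\vec x|}-\tfrac{\vec I}{3}\big)$, whose density is $\tfrac32\cdot 8\pi = 12\pi$ (using $|\nabla \vec Q^0|^2 = 3|\nabla \vec n^0|^2$ for uniaxial maps and the value $8\pi$ for the degree-$\pm1$ map $\vec x/|\vec x|$). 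Hence the left-hand side is $\le 12\pi$ for every $R$, as claimed. The main obstacle is precisely this last identification together with the legitimacy of commuting the two limits: one must insert the fixed macroscopic scale $\rho$ between the microscopic scale $\xi_j R$ and the singularity and exploit monotonicity at both the LdG and the harmonic-map levels, so that the constant $12\pi$ is pinned by the energy of the degree-$\pm1$ tangent map rather than by the (possibly larger) total energy.
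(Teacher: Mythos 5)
Your argument is correct and takes essentially the same route as the paper: a change of variables plus the monotonicity formula of Proposition~\ref{pr:before}\,(\ref{it:monotone}) reduces the blow-up energy to the scale-invariant LdG energy at a fixed small macroscopic radius, the $4\pi$ energy quantization of \cite{bcl} at singular points of $\vec n^0$ (equivalently, the $12\pi$ density of the degree-$\pm1$ tangent map of $\vec Q^0$, via $|\grad \Qvec^0|^2=3|\grad\nvec^0|^2$) pins the constant in \eqref{eq:relevantenergy}, and elliptic regularity upgrades the convergence to $C^k_{\loc}$. The only cosmetic differences are that you get compactness from the uniform Lipschitz bound of Proposition~\ref{pr:before}(ii) rather than from weak $W^{1,2}_{\loc}\cap L^4_{\loc}$ compactness, and that the paper handles the mismatch between balls centered at $\vec x_j^*$ and at $\vec x^*$ with an explicit factor $r/(r-|\vec x_j^*-\vec x^*|)$, a detail you elide but which is easily restored.
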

\begin{proof}
The proof follows from the celebrated energy quantization result for  minimizing harmonic maps at singular points, established in \cite{bcl}:
    \begin{align} \label{eq:quanti1b}
        \lim_{r\to 0} \frac{1}{r} \int_{B(\vec x^*, r)} \frac{1}{2}|\nabla \vec n^0|^2 \dd V = 4\pi, \quad i=1,\ldots, N.
    \end{align}
    
We begin by noting that  $|\grad \Qvec^0|^2 = 3|\grad
     \nvec^0|^2$, therefore
    \begin{align} \label{eq:quanti2b}
        \frac{1}{r}\int_{B(\vec x^*, r)}
            \frac{1}{2}|\nabla \vec Q_j|^2 + f(\vec Q_j,t_j)~\dd V
            \ \overset{j\to\infty}{\longrightarrow}\
             \frac{3}{r}\int_{B(\vec x^*, r)} \frac{1}{2}|\nabla \vec n^0|^2~\dd V
    \end{align}
    for every small $r>0$.

    By the monotonicity formula, Proposition \ref{pr:before}\,(\ref{it:monotone}),
    for every fixed $R>0$, every small $r>|\vec x_j^{*}-\vec x^*| + \xi_j R$,
    and every $j$ sufficiently large, we have that
    \begin{align}
        \frac{1}{R}&\int_{|\vtilde x|<R}
        \frac{1}{2}|\nabla \vtilde Q_j(\vtilde x)|^2
        + \frac{(1-|\vtilde Q_j|^2)^2}{8}
        \dd V
        \\ &\leq
        \frac{1}{\xi_j R} \int_{|\vec x-\vec x_j^{*}|<\xi_j R} \frac{1}{2}|\nabla \vec Q_j(\vec x)|^2+f (\vec Q_j(\vec x), t_j)~\dd V
        \\ & \leq
        \frac{1}{r-|\vec x_j^{*}-\vec x^*|} \int_{B(\vec x_j^{*}, r-|\vec x_j^{*}-\vec x^*|)} \frac{1}{2}|\nabla \vec Q_j(\vec x)|^2+f (\vec Q_j(\vec x), t_j)~\dd V
        \\ & \leq
        \frac{r}{r-|\vec x_j^{*}-\vec x^*|}\cdot \frac{1}{r}\int_{B(\vec x^*, r)} \frac{1}{2}|\nabla \vec Q_j(\vec x)|^2+f (\vec Q_j(\vec x), t_j)~\dd V
    \end{align}
    (we have used the inequality $\frac{t}{8\bar L}(1-|\vtilde Q_j |^2)^2 \leq f(\vtilde Q_j, t_j)$ above).
    This combined with \eqref{eq:quanti2b} and \eqref{eq:quanti1b} yields the following inequality
    \begin{align}
        \begin{aligned}
            \limsup_{j\to \infty} \frac{1}{R}\int_{|\vtilde x|<R}
            \frac{1}{2}&|\nabla \vtilde Q_j(\vtilde x)|^2
            + \frac{(1-|\vtilde Q_j|^2)^2}{8}~\dd V
            \\
            &\leq 3\left (\limsup_{r\to 0^+} \frac{1}{r}\int_{B(\vec x^*, r)} \frac{1}{2}|\nabla \vec n^0|^2~\dd V\right )\leq 12\pi
        \end{aligned}
    \end{align} for every $R>0$.

    Using the energy bound above, we can extract a diagonal subsequence, converging weakly in $W^{1,2}_{\loc}\cap L^4_{\loc}(\R^3; S_0)$, to a limit map $\vtilde Q^\infty$ satisfying the  energy bound \eqref{eq:relevantenergy}.
    One can check that  $\vtilde Q^\infty$ solves the weak form of the Ginzburg-Landau equations in $\R^3$ (write the weak form of the partial differential equations for $\vtilde Q_{j_k}$ and pass to the limit when $k\to \infty$). Standard arguments in elliptic regularity then  show that
    $\vtilde Q^\infty$ is a classical solution of the Ginzburg-Landau equations and that the diagonal subsequence converges in $\bigcap_{k\in\N} C^{k}_{\loc}$ to $\vtilde Q^\infty$. 
\end{proof}

\begin{proof}[Proof of Theorem \ref{th:biaxial2}] (i) 
	It follows from Propositions \ref{pr:before} and \ref{pr:uniC}.
	
(ii) This is an immediate consequence of the uniform convergence,
$\Qvec_j \to \Qvec^0$ as $j \to \infty$, away from the singular
set, $\Sigma=\left\{\xvec_1 \ldots \xvec_N \right\}$ of $\Qvec^0$.
$\Qvec^0$ is purely uniaxial by definition i.e.
$\beta^2\left(\Qvec^0 \right) = 0$ (see (\ref{eq:maj9}) for the
definition of the biaxiality parameter, $\beta^2(\Qvec)$). The map
$\Qvec \mapsto \beta^2\left(\Qvec \right)$ is continuous for
$\Qvec \neq 0$ and the conclusion, $B_\delta^j \subseteq
\Sigma_{{\eps}}$, follows for any fixed ${\eps}$, provided $j$ is large enough.

(iii) This can be proven as in \cite{contreraslamy}, where the
authors prove that $|\Qvec_j(\xvec)|>0$ on $\Omega$, for $j$ large
enough. 
 We argue by contradiction and we assume that there exist points $\xvec_j^* \in
\Omega$ such that $\left|\Qvec_j\left(\xvec_j^*\right)\right|\leq 1 - \eta$
(for some $\eta>0$ independent of $j$), for all $j$ in the sequence. 

In view of part (i), we may assume $\xvec_j^* \to \xvec^*$ for some $\xvec^* \in \Sigma$ and
repeat the arguments in Lemma $3.1$ and $4.1$ of \cite{contreraslamy} i.e.\ perform a blow-up analysis of the re-scaled maps, $\Qvec^*_j(\xvec) = \Qvec_j\left(\xvec_j^* + \frac{\xvec}{\sqrt{t_j}} \right)$. By Lemma \ref{le:blowup} and \cite[Lemma 3.1]{contreraslamy},
 the rescaled minimizers converge locally smoothly to a minimizer, $\Qvec_\infty \in C^2(\mathbb{R}^3; S_0)$, of the Ginzburg-Landau energy, 
\begin{equation}
\label{eq:GL}
GL(\Qvec; A)=\int_A |\nabla \Qvec|^2 +\frac{1}{4} (1-|\Qvec|^2)^2 dV
\end{equation}
 (on open sets with compact closure $\subset \mathbb{R}^3$ with respect to its own boundary conditions)  with the energy growth $GL(\Qvec;B_R(0)) = \mathcal{O} ( R)$ as $R \to \infty$. In addition we have $|\Qvec_\infty(0)| \leq 1-\eta$ because of the normalization. We can then use the same blow-down analysis as in \cite{contreraslamy} to show that $\Qvec_R(\xvec) = \Qvec_\infty\left(R \xvec\right)$ as $R\to \infty$ converges strongly in $W^{1,2}_{loc}$ to a $\S^4$-valued minimizing harmonic map, labelled by $\hat{\Qvec}_\infty$.  Indeed, one can use the well-known Luckhaus interpolation Lemma as in \cite{PanatiPisante}, Proposition 4.4, still for a sequence of functionals converging to the Dirichlet integral for maps into a manifold, showing that minimality persist in the limit and the convergence is actually strong in $W^{1,2}_{loc}$.
  
 From the monotonicity formula for the Ginzburg-Landau energy, $\hat{\Qvec}_\infty$ is a degree-zero homogeneous harmonic map, hence it is smooth away from the origin by partial regularity theory \cite{schoen}. Since the latter is constant by \cite{schoen2}, the GL minimizer $\Qvec_\infty$ is also a constant matrix of norm one from the monotonicity formula for the GL energy. Thus, $|\Qvec^*_j(0)| \to |\Qvec_\infty (0)|=1$  which yields the desired contradiction.

(iv) Let $\delta \in (0,1)$ be fixed. From part (i), (ii) and since ${\eps}>0$ is fixed and arbitrary, we
necessarily have
\begin{equation}
\label{eq:bp1} \beta^2\left(\Qvec_j \right)|_{\partial
B_{{\eps}}(\xvec_i)} \leq \delta
\end{equation}
for $j$ sufficiently large, $\xvec_i \in \Sigma$  (depending only on ${\eps}$). From (iii) above, we have that $ |\Qvec_j| \to 1$ uniformly on $ \overline{B_{{\eps}}(\xvec_i)}$ as $j \to \infty$. 

Thus, if we define the set 
\begin{align}
	\mathcal{N}_\sigma =\{ \Qvec \in S_0 \, \, s.t. \, \, \beta^2(\Qvec){\leq}\sigma \, \, \rm{and} \, \, 1-\sigma \leq |\Qvec|\leq 1 \}
	\end{align}
	 for each $0\leq \sigma<1$ and let $\delta<\sigma$, we have the following: 
\begin{itemize}
\item The restriction of $\Qvec_j$ to the boundary, $\Qvec_j \in C(\partial B_{{\eps}} (\xvec_i); \mathcal{N}_\delta)$ for $j$ large enough (depending only on ${\eps}$ (by \eqref{eq:bp1}) and $\Qvec^0 \in C(\partial B_{{\eps}} (\xvec_i); \mathcal{N}_\delta)$ (in view 
of the inclusion
$\Qvec_{min} =\mathcal{N}_0 \subset \mathcal{N}_\delta$).
\item For $\delta<\sigma<1$, the maps $\Qvec_j$ and $\Qvec^0$ are homotopic in $C(\partial B_\eps (\xvec_i); \mathcal{N}_\sigma)$
(thanks to the uniform convergence; composing pointwise with the affine homotopy in $S_0$ keeps the images inside $\mathcal{N}_\sigma$ for $j$ large enough).
\item $\mathcal{N}_\sigma \supset \mathcal{N}_0$ retracts homotopically onto $\mathcal{N}_0=\Qvec_{min}\sim \mathbb{R}P^2$ for every $\sigma<1$, see \cite[Lemma 3.10]{canevari};
see also \cite{contreraslamy}, Corollary 1.2 and Section 5 therein.
\end{itemize}

	Suppose, for a contradiction, that $\max_{\overline{B_{\eps}(\xvec_i)}} \beta^2(\Qvec_j)<1$ and let $\sigma \in (\max\{\delta,  \max_{\overline{B_{\eps}(\xvec_i)}} \beta^2(\Qvec_j)\} , 1)$.
	Then the composition of the aforementioned retraction with $\vec Q_j$
	yields a map $\vec Q_j^* \in C(\overline B_\eps; \vec Q_{\min})$
	whose trace $\vec Q_j^*|_{\partial B_\eps}$ is homotopic in 
	$C(\partial B_\eps; \vec Q_{\min})$ to $\vec Q^0|_{\partial B_\eps}$.
	By Lemma \ref{le:homotopy} we would conclude that 
	$\deg \vec n^0|_{\partial B_\eps}=0$, a contradiction with the fact that
	$\textrm{deg} ~\nvec^0|_{\partial B_\eps}=\pm1$ near each singular point, for $\eps$ small enough fixed at the beginning.

Similarly, assume that $
\min_{\overline{B_{{\eps}}(\xvec_i)}} \beta^2(\Qvec_j)>0$ for infinitely many $j$ in the sequence.
Then $\Qvec_j(\xvec)$ is purely biaxial for all $\xvec \in B_{\eps}(\xvec_i)$
(recall that there are no isotropic points from part (iii)).
Let $\vec n_j(\vec x) \in \S^2$ be the eigenvector corresponding to the maximum eigenvalue 
(which is uniquely defined up to a sign). 
Recall that $\vec Q_j$ is continuous in $\overline{B_\eps(\vec x_i)}$, hence 
 $\vec x \in \Omega \mapsto \vec n_j \otimes \vec n_j \in \R P^2$
is continuous 
(if $\vec x_k\to \vec x$ and $\vec n_j(\vec x_k) \overset{k\to\infty}{\longrightarrow}
 \vec n'$
 then clearly $\vec n'$ maximizes $\vec n \cdot \vec Q_j(\vec x) \vec n$ in $\S^2$
 and the maximal eigenvalue is simple). As a consequence, we choose $\vec n_j$ to be a continuous lifting so that $\vec n_j \in C(\overline{B_\eps(\vec x_i)}; \S^2)$.
 
 Now,  $\vec Q_j$ converges uniformly to $\vec Q^0$ on $\partial B_\eps$.
Therefore, 
$|\vec Q_j| \to 1$ and $\beta(\vec Q_j)\to 0$
uniformly on $\partial B_\eps$.
This implies that  
$\vec n_j\otimes \vec n_j\to \vec n^0\otimes \vec n^0$ uniformly on $\partial B_\eps$ 
since
\begin{align*}
	 \left |\sqrt\frac {3}{2}\left (\vec n_j\otimes \vec n_j -\frac{\vec I}{3}\right )
	 	- \frac{\vec Q_j}{|\vec Q_j|}\right |
		+ \left | \frac{\vec Q_j}{|\vec Q_j|} - \vec Q^0\right |
		\overset{j\to\infty}{\longrightarrow} 0.
\end{align*}
We conclude that $\vec Q^0|_{\partial B_\eps}$ is homotopic
to $\sqrt{\frac{3}{2}}(\vec n_j\otimes \vec n_j-\frac{\vec I}{3})$,
first in $C(\partial B_\eps, \mathcal N_\sigma)$ for some small $\sigma$
(composing pointwise with the affine homotopy in $S_0$) and 
then in $C(\partial B_\eps, \vec Q_{\min})$ (composing with the retraction from $\mathcal N_\sigma$ to $\vec Q_{\min}$). Since $\sqrt{\frac{3}{2}}(\vec n_j\otimes \vec n_j-\frac{\vec I}{3})$
has a continuous extension inside $\overline{B_\eps}$, we recall Lemma \ref{le:homotopy}
and obtain a contradiction with the fact that $\deg \vec n^0|_{\partial B_\eps}=\pm 1$
for every $\eps>0$ small enough.

The uniaxial set has zero Lebesgue-measure, as has already been  established in
\cite[Prop.~14]{amaz}.

%

(v) For each $\xvec_i \in \Sigma$ and $\delta \in \left(0, 1
\right)$ fixed, consider the biaxiality set,
$B_{\eps}(\xvec_i)\cap B_\delta^j$, around $\xvec_i$ and its
diameter, $d_j:=\textrm{diam}\left(B_{\eps}(\xvec_i)\cap
B_\delta^j \right)$. We have $d_j=o(1)$ as $j \to \infty$ from
(i) and (ii) above.

We claim that $d_j \sim t_j^{-1/4}$ as $j \to \infty$, which
follows by blowing up $\Qvec_j$, at scale $d_j$, and
excluding remaining decay rates. Firstly, let $\vec p_j, \vec q_j \in B_{\eps}(\xvec_i)\cap B_\delta^j $ such that $d_j=|\vec p_j-\vec q_j|$ and let $\hat{\xvec}_j:=(\vec p_j+\vec q_j)/2$. Clearly 
$\left(B_{\eps}(\xvec_i)\cap B_\delta^j \right) \subseteq B(\hat{\xvec}_j, \frac{3d_j}{2})$ and $\hat{\xvec}_j \to \xvec_i$ as $j \to \infty$. Then by defining $B^j:=B^j(\hat{\xvec}_j,d_j/2)$ and 
by \eqref{it:localization}, 
we immediately
have $\beta^2\left(\Qvec_j \right) \leq \delta$ on $\partial B(\hat{\xvec}_j, \frac{3d_j}{2})$, $\beta^2\left(\Qvec_j \right) = \delta$ at two antipodal points on $\partial B^j$, 
and $\displaystyle \max_{\overline{B}(\hat{\xvec}_j, \frac{3d_j}{2})}\beta^2\left(\Qvec_j \right)=1$, for $j$
large enough. 


Define $\hat{\Qvec}_j(\xvec) = \Qvec_j\left(\hat{\xvec}_j +  d_j \xvec/2 \right)$ and we get, up to a sequence of rotations which we do not specify explicitly, 
\begin{eqnarray} \label{eq:bp3}
\beta^2\left(\hat{\Qvec}_j \right)|_{\partial (\frac{3}{2}B)} \leq \delta \, , \quad \beta^2\left(\hat{\Qvec}_j (0,0,\pm1) \right)= \delta\, , \quad
\max_{\overline {(\frac{3}{2}B)}}\beta^2\left(\hat{\Qvec}_j \right) = 1
\end{eqnarray} on the unit ball $B = B\left(0,1 \right)$. The the rescaled maps $\hat{\Qvec}_j$ are defined on the family of expanding domains, $2(\Omega- \hat{\xvec}_j )/ d_j \to \Rr^3$ and are local minimizers on compact subdomains of the
functionals
\begin{equation}
\label{eq:bp2} I_j\left[\hat{\Qvec}_j \right]: = \int
\frac{\bar{L}}{2}|\grad \hat{\Qvec}_j|^2 + \frac{d^2_j}{4} \left[
\frac{t_j}{8}\left(1 - |\hat{\Qvec}_j|^2 \right)^2 +
\frac{h_+}{8}\left(1 + 3|\hat{\Qvec}_j|^2 -
4\sqrt{6}\textrm{tr}\hat{\Qvec}_j^3 \right)\right]~dV
\end{equation} with $h_+ \sim \sqrt{t_j}$ as $j \to \infty$.
Taking into account the Euler-Lagrange equations (corresponding to \eqref{eq:bp2}), we can exclude
the following regimes: (a) $d_j << t_j^{-1/2}$ since we easily
deduce that (up to subsequences) $\hat{\Qvec}_j \to \Qvec_*$ in $C^k_{loc}\left(\Rr^3
\right)$ for $k \in \mathbb{N}$ by the uniform $L^\infty$-bound and elliptic regularity. Indeed, for $d_j << t_j^{-1/2}$, the nonlinear terms in the Euler-Lagrange equations vanish as $j \to \infty$. Thus $\Qvec_* \in C^2\left(\Rr^3
\right)$ is bounded and harmonic, hence constant (of norm one from (iii) above) by Liouville's Theorem and this fact contradicts
(\ref{eq:bp3}) which holds for the limiting map $\Qvec_*$ by uniform convergence. (b) $d_j \sim t_j^{-1/2}$ ; this regime has already been discussed in item (iii) above and hence, up to a subsequence,
$\hat{\Qvec}_j \to \Qvec_{**}$ in $C^k_{loc}\left(\Rr^3 \right)$ for $k \in \mathbb{N}$. Here $\Qvec_{**}$ is a bounded
Ginzburg-Landau local minimizer on the whole of $\mathbb{R}^3$ such that $\int_{B_R}
\frac{1}{2}|\grad \Qvec_{**}|^2 + \left(1 - |\Qvec_{**}|^2 \right)^2 =\mathcal{O}  (R)$ as $R \to \infty$.
Arguing as in Lemma \ref{le:blowup} and item (iii) above, we infer that $\Qvec_{**}$ is a constant matrix of norm one, contradicting \eqref{eq:bp3} which still passes to the limit under smooth convergence and clearly cannot hold for constant maps.

 (c) $t_j^{-1/2} << d_j <<
t_j^{-1/4}$. Here (up to a subsequence), the limiting map is a local minimizer of $\int |\grad \Qvec|^2$ among
$\S^4$-valued maps. Indeed the sequence is locally bounded in $H^1_{\rm loc}(\mathbb{R}^3)$ by the monotonicity formula and hence converges weakly in $H^1_{\rm loc}$ (up to a subsequence). The limiting map is clearly $\S^4$-valued, as can be seen by applying Fatou's Lemma to \eqref{eq:bp2}. Additionally, we can prove strong convergence to the limiting map and the minimality of the limiting map, arguing as in item (iii) above, i.e. using the well-known Luckhaus interpolation Lemma as in \cite{PanatiPisante}, Proposition 4.4, for a sequence of functionals converging to the Dirichlet integral for maps into a manifold. 

%
%

Therefore, $\hat{\Qvec}_j \to \Qvec_h$ in
$H^1_{loc}\left(\Rr^3 \right)$ and $\Qvec_h \in
W^{1,2}_{loc}\left(\Rr^3, \S^4 \right)$ is a minimizing harmonic map. By the regularity theory of minimizing harmonic maps  \cite{schoen}, the map $\Qvec_h$ is smooth away from a locally finite set and indeed $\Qvec_h \in C^\infty(\mathbb{R}^3; \S^4) $ by the constancy of stable tangent maps into spheres proven in \cite{schoen2}. Further, we have the energy bound, $\int_{B_R} |\grad \Qvec_h|^2
\leq C R$ for a positive constant $C$, by the monotonicity formula which allows us to blow-down $\Qvec_h$ from infinity.
Thus, $\Qvec_h$ has minimizing tangent maps at infinity and  the rescaled harmonic maps converge strongly to the tangent maps (up to a subsequence) by Luckhaus compactness theorem for harmonic maps.
We use the constancy of stable tangent maps into spheres from \cite{schoen2} and the monotonicity formula, arguing by analogy with case (b), to infer that $\Qvec_h$ is a constant matrix of norm one.
In view of this constancy property, we can improve the convergence $\hat{\Qvec}_j \to \Qvec_h$ in
$H^1_{loc}\left(\Rr^3 \right)$ to a smooth convergence (we just need to use the argument based on the Bochner inequality from (i) above). Since biaxiality is constant for constant maps,
we contradict (\ref{eq:bp3}). 

Finally, we consider the
regime (d) $t_j^{-1/4} << d_j << 1$. Here, the limiting energy is
again the Dirichlet energy, $\int |\grad \Qvec|^2~dV$, for
$\Qvec_{min}$-valued maps in $H^1_{\rm loc}(\mathbb{R}^3)$, as can be seen by applying Fatou's Lemma to \eqref{eq:bp2}. We again have $\hat{\Qvec}_j \to
\Qvec_h$ in $H^1_{loc}\left(\Rr^3 \right)$, arguing similarly to part (c) above. However, from the uniaxiality of the limiting tensor and the lifting results in \cite{bz}, we lift $\Qvec_h$ to an
$\S^2$-valued minimizing harmonic map $\bar{\vec n} \in H^1_{loc}(\mathbb{R}^3;\S^2) $. From the classification result for harmonic unit-vector fields, such as $\bar{\vec n}$, in \cite[Thm. 2.2]{AlmgrenLieb}, we either
have $\Qvec_h = constant$ or $\Qvec_h =
\sqrt{\frac{3}{2}}\left(\frac{\xvec \otimes \xvec}{|\xvec|^2} -
\frac{\mathbf{I}}{3} \right)$, again as in step (c) with locally smooth convergence except at most at one point (combining the smoothness of the limiting map with small energy regularity to infer smooth convergence). This contradicts (\ref{eq:bp3}) since
$\beta^2\left(\Qvec_h \right) = 0$ {everywhere except  possibly for the origin, since $\Qvec_h$ is uniaxial for $\xvec \neq \mathbf{0}$.}

\end{proof}

\begin{proof}[Proof of Theorem \ref{th:radial}]

We can prove the existence of a global LdG minimizer $\Qvec_j$, of the re-scaled energy (\ref{LdGnew2}), in the restricted class of uniaxial $\Qvec$-tensors, for each $t_j$, from the direct methods in the calculus of variations. It suffices to note that the uniaxiality constraint, $6\left(\textrm{tr} \Qvec^3 \right)^2 = |\Qvec|^6$ is weakly closed and the existence result follows immediately.

The limiting harmonic map $\Qvec^0$ is uniaxial and hence, the energy bound (\ref{eq:MAIN}) follows immediately since the upper bound is simply the re-scaled LdG energy of $\Qvec^0$. The uniaxial map, $\Qvec_j =  s_j\left(\nvec_j \otimes \nvec_j - \frac{\mathbf{I}}{3} \right)$, necessarily has non-negative scalar order parameter. Indeed, note that by uniaxiality, $\det \, \Qvec(x) >0$ (resp. $\det \, \Qvec(x)<0$) iff $\Qvec(x)$ has positive (resp. negative) scalar order parameter and also that $\det \, \Qvec(x)=0$ iff $\Qvec(x) =0$ at any $x\in \Omega$. We set $\Omega_j:= \{ \det \, \Qvec_j(x)<0 \}  \subset \Omega$, which is an open subset (possibly empty), since $\Qvec_j$ is globally Lipschitz in $\Omega$. If $\Omega_j \neq \emptyset$, then we define the uniaxial admissible perturbation
\begin{equation}
 \Qvec_j^* (\rvec) = \begin{cases}
\Qvec_j \quad \rvec\in \Omega\setminus  \Omega_j \\
- \Qvec_j \quad \rvec\in \Omega_j
\end{cases}
\end{equation}
and one can easily check that $\Qvec^*_j$ is globally Lipschitz in $\Omega$ and $ \frac{3\bar L}{2Ls_+^2}
        \mathbf{I}^j_{LG}[\Qvec_j^*] < \frac{3\bar L}{2Ls_+^2}
        \mathbf{I}^j_{LG}[\Qvec_j]$, contradicting the assumed global minimality of $\Qvec_j$ in the restricted class of uniaxial $\Qvec$-tensors.
        We can then appeal to Proposition~\ref{prop:radial2} and proceed by contradiction. We assume that the global LdG-minimizers, $\Qvec_j$, in the restricted class of uniaxial $\Qvec$-tensors, are stable critical points of the LdG energy, for $j$ large enough. The sequence, $\left\{\Qvec_j \right\}$, then satisfies the hypothesis of Proposition~\ref{prop:radial2}, for large $j$. We thus, conclude that each $\Qvec_j$, has a  set of isotropic points $\xvec_i^{(j)}$ (at least one near each singular point $\xvec_i$ of $\Qvec^0$) and $\Qvec_j$ is asymptotically described by the RH-profile near each isotropic point $\xvec_i^{(j)}$ as $j \to \infty$ in the sense of 
        Proposition \ref{prop:radial2}. Recall that the RH-solution, (\ref{eq:hedgehog}) is known to be unstable with respect to biaxial perturbations localized around the origin \cite{ejam}, \cite{g2}. This suffices to prove that global minimizers in the restricted class of uniaxial $\Qvec$-tensors cannot be stable critical points of the LdG energy in the low-temperature limit, since stability of $\Qvec_j$ would pass to the limit under smooth convergence.
        \end{proof}

  \begin{proof}[Proof of Proposition~\ref{prop:radial2}]

  	{\it Proof of (i):}
	By Propositions \ref{pr:before} and \ref{pr:uniC}, after extracting a subsequence, we have that $\left\{\Qvec_j\right\}$ converges strongly in $W^{1,2}$ and uniformly away
	 from the singular set $\Sigma=\left\{\xvec_1 \ldots \xvec_N \right\}$, 
	to a (minimizing) limiting harmonic map, $\Qvec^0$.
	 We prove that for each $i=1,\ldots, N$ and every fixed $r_0>0$ sufficiently small, there exists $j_0\in \N$ such that for every $j\geq j_0$,
    the map $\vec Q_j$ has an isotropic point, $\vec x_i^{(j)}$, in $\overline B(\vec x_i, r_0)$. The stated conclusion then follows by a diagonal argument on $r_0$. Suppose, for a contradiction, that we can find a subsequence, $\{j_k\}_{k\in\N}$, such that $\min_{B(\vec x_i, r_0)} |\vec Q_{j_k}|>0$ for all $k\in\N$.
    {Since $\vec Q_j$ is purely uniaxial for all $j$ by assumption, we have that $\frac{\vec Q_{j_k}}{|\vec Q_{j_k}|}$ is continuous on $\overline{B(\vec x_i, r_0)}$ and
    the uniform convergence to $\vec Q^0$ implies that $\frac{\vec Q_{j_k}}{|\vec Q_{j_k}|}$
    converges uniformly to $\vec Q^0$ on $\partial B_\eps$.}
    Arguing as in the proof of Theorem \ref{th:biaxial2}\,\eqref{it:uniaxial1}
    we obtain a contradiction.

    {\it Proof of (ii):}
    The aim is to prove that $\vec Q_j$ has a radial-hedgehog type of profile,  (\ref{eq:hedgehog}), near each singular point in $\Sigma$, for $j$ sufficiently large. The proof follows from Lemma \ref{le:blowup} and Propositions $4$ and $8$ in \cite{hm}.
    We begin by noting that for each $i=1\ldots N$
    in $\{\vec x_1, \ldots, \vec x_N\}$, we can extract a sequence,
    $\left\{\vec x_j^*\right\}$, such that
    $\vec Q_j\left(\vec x_j^* \right) = 0$ and $\vec x_j^* \to \vec x_i$ as $j\to \infty$.
	By Lemma \ref{le:blowup}, the rescaled maps 
	\eqref{eq:def-blowup}
	converges in $\bigcap_{k\in\N} C^{k}_{\loc}$
	to a classical solution $\vtilde Q^\infty$ of the Ginzburg-Landau equations
	satisfying the energy growth \eqref{eq:relevantenergy}.
	Moreover, it can be seen that $\vtilde Q^\infty$ is uniaxial and has a  non-negative scalar order parameter.
	Finally $\vtilde Q^\infty (\vec 0)=\vec 0$ because $\vtilde Q_{j_k}(0)=\vec 0$ for each $k$, by assumption. We conclude that the hypotheses of \cite[Prop.~8]{hm} are satisfied. We reproduce the statement of \cite[Prop.~8]{hm} below, for completeness.
    \begin{proposition} [Proposition 8, \cite{hm}]
\label{prop:S}
Let $\vec Q\in C^2(\R^3; S_0)$ be a uniaxial solution of $\Delta  \Qvec = (|\Qvec|^2-1)\Qvec$ with $\vec Q(0)=0$ and non-negative scalar order parameter, satisfying
the energy bound \eqref{eq:relevantenergy}.
Let $h$ denote the unique solution for the boundary-value problem \eqref{eq:RH}.
Then there exists an orthogonal matrix $\vec T \in \mathcal O(3)$ such that
\begin{align}
\label{eq:main}
\Qvec(\vec x) = \sqrt{\frac{3}{2}} h( |\xvec|) \left( \frac{\vec T \xvec \otimes \vec T\xvec}{|\xvec|^2} - \frac{\mathbf{I}}{3} \right), \quad \xvec \in \R^3.
\end{align}
\end{proposition}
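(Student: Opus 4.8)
The plan is to classify the entire solution $\Qvec$ by combining the Ginzburg--Landau monotonicity formula, a blow-down analysis at infinity, the rigidity built into the uniaxiality constraint, and a final symmetrization that reduces everything to the radial ODE \eqref{eq:RH}. Since $\Qvec$ is a smooth entire solution of $\Delta\Qvec=(|\Qvec|^2-1)\Qvec$ obeying the linear growth \eqref{eq:relevantenergy}, the normalized energy $r\mapsto \frac1r\int_{B(\vec 0,r)}\big(\frac12|\grad\Qvec|^2+\frac18(1-|\Qvec|^2)^2\big)\dd V$ is monotone non-decreasing and bounded above by $12\pi$; hence the density $\Theta:=\lim_{r\to\infty}\frac1r\int_{B(\vec 0,r)}\big(\frac12|\grad\Qvec|^2+\frac18(1-|\Qvec|^2)^2\big)\dd V$ exists with $\Theta\le 12\pi$. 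The value $12\pi$ is exactly three times the $4\pi$ appearing in \eqref{eq:quanti1b} (recall $|\grad\Qvec|^2=3|\grad\nvec|^2$ on the unit-norm uniaxial manifold), which already signals that the admissible topology is that of a single degree $\pm1$ defect.

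First I would analyze the behaviour at infinity. Rescaling $\Qvec_R(\vec x):=\Qvec(R\vec x)$ and using monotonicity together with the uniform energy bound, a subsequence converges as $R\to\infty$ to a $0$-homogeneous tangent map $\Qvec_\infty$; the potential forces $|\Qvec_\infty|\equiv 1$, and since uniaxiality with $s\ge 0$ is a closed constraint it passes to the limit, so $\Qvec_\infty$ is a harmonic map into $\Qvec_{\min}\cong\mathbb{R}P^2$ (see \eqref{bpnew}). Being $0$-homogeneous, $\Qvec_\infty$ is determined by its restriction to $\S^2$, a harmonic map $\S^2\to\mathbb{R}P^2$, which lifts to a harmonic map $\S^2\to\S^2$ because $\S^2$ is simply connected (as in Lemma \ref{le:homotopy}). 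Using the stability of the tangent map (inherited from the minimizing origin of these blow-up limits) together with the constancy of stable tangent maps into spheres \cite{schoen2} and the quantization \cite{bcl}, the only configuration compatible with $\Theta\le 12\pi$ and $\Qvec(\vec 0)=\vec 0$ is the rotated hedgehog direction field, so that $\Qvec_\infty(\vec x)=\sqrt{\frac32}\big(\frac{\vec T\vec x\otimes\vec T\vec x}{|\vec x|^2}-\frac{\vec I}{3}\big)$ for some $\vec T\in\mathcal O(3)$ and $\Theta=12\pi$. After composing with a fixed rotation I may take $\vec T=\vec I$.

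Next I would extract the equations imposed by uniaxiality. Writing $\Qvec=s(\nvec\otimes\nvec-\frac{\vec I}{3})$ with $s\ge0$ and $\nvec\in\S^2$ on the open set $\{s>0\}$, I would project the equation $\Delta\Qvec=(|\Qvec|^2-1)\Qvec$ onto the three natural families of directions in $S_0$: the scalar direction $\nvec\otimes\nvec-\frac{\vec I}{3}$, the director-rotation directions $\nvec\otimes\delta\nvec+\delta\nvec\otimes\nvec$ with $\delta\nvec\perp\nvec$, and the remaining biaxial directions. Because the right-hand side is purely uniaxial, the biaxial components of $\Delta\Qvec$ must vanish; this is the rigidity that uniaxiality forces on a Ginzburg--Landau solution, and it yields that $\nvec$ is a harmonic map into $\S^2$ on $\{s>0\}$, while reducing the remaining content to a scalar equation for $s$ coupled through $|\grad\nvec|^2$ (this is precisely the reduction underlying the companion results of \cite{hm}).

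Finally --- and this is where I expect the main difficulty --- I would upgrade the asymptotic information into exact radial symmetry. The director $\nvec$ is a harmonic map $\R^3\to\S^2$, smooth away from the isotropic set, carrying a single degree $\pm1$ point defect with tangent map $\vec x/|\vec x|$ at infinity; using the sharp identity $\Theta=12\pi$ (which leaves room for no more than one such defect) and the uniqueness of minimizing harmonic maps with hedgehog boundary data \cite{bcl}, I would conclude $\nvec(\vec x)\equiv \vec x/|\vec x|$, i.e. $\Qvec$ is $\mathcal O(3)$-equivariant with $s=s(|\vec x|)$ radial. Equivariance then collapses the scalar equation to the radial ODE in \eqref{eq:RH}, with the coupling $|\grad\nvec|^2=2/|\vec x|^2$ producing the $-6h/r^2$ coefficient; the conditions $s(\vec 0)=0$ (from $\Qvec(\vec 0)=\vec 0$) and $|\Qvec|\to1$ at infinity (from $\Theta=12\pi$) select the unique monotone solution $h$ of \eqref{eq:RH} (cf.\ \cite{lamy,am2}), giving \eqref{eq:main}. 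The delicate points are to rule out additional zeros of $s$ that could obstruct the global lifting of $\nvec$ and the equivariance, and to justify the stability that pins the tangent map to a genuine rotation rather than a conformally twisted hedgehog; I expect to control both through the sharp density $\Theta=12\pi$, the partial regularity of \cite{schoen}, and the constancy of stable tangent maps \cite{schoen2}.
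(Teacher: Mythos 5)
The paper itself does not prove this statement: Proposition \ref{prop:S} is quoted verbatim from \cite{hm} (Proposition~8 there) and is used as an external ingredient in the proof of Proposition \ref{prop:radial2}, so the only meaningful comparison is with the argument of \cite{hm}. Your overall architecture --- monotonicity and blow-down at infinity, reduction of the uniaxial Ginzburg--Landau system to a coupled $(s,\nvec)$ system with $\nvec$ harmonic on $\{s>0\}$, identification of $\nvec$ with a rotated hedgehog, and collapse to the radial ODE \eqref{eq:RH} --- is faithful in spirit to that source.

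There is, however, a genuine gap at precisely the step you flag as the main difficulty, and the tools you propose cannot close it. To conclude that the tangent map at infinity is $\sqrt{3/2}\,\bigl(\frac{\vec T\xvec\otimes\vec T\xvec}{|\xvec|^2}-\frac{\vec I}{3}\bigr)$ with $\vec T\in\mathcal O(3)$, you invoke ``stability of the tangent map (inherited from the minimizing origin of these blow-up limits)'' together with \cite{schoen2} and \cite{bcl}; but the hypotheses of Proposition \ref{prop:S} contain no minimality or stability assumption whatsoever --- $\Qvec$ is merely a $C^2$ solution with linear energy growth --- so there is nothing to inherit. Your fallback, the sharp density $\Theta=12\pi$, cannot substitute for it: every degree-$\pm 1$ harmonic map $\omega:\S^2\to\S^2$ is holomorphic or antiholomorphic, hence conformal, and has exactly the same Dirichlet energy $8\pi$; consequently the homogeneous maps $\sqrt{3/2}\,\bigl(\omega(\xvec/|\xvec|)\otimes\omega(\xvec/|\xvec|)-\frac{\vec I}{3}\bigr)$ realize the density $12\pi$ for \emph{every} M\"obius $\omega$, isometric or not, and the energy alone cannot distinguish the rotated hedgehog from its conformally twisted deformations. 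Excluding the latter requires an additional mechanism: either genuine minimality (as in the Brezis--Coron--Lieb classification of minimizing tangent maps into $\S^2$, or the Almgren--Lieb classification used elsewhere in this paper), or, as in \cite{hm}, the extra pointwise constraint equations that uniaxiality forces on $(s,\nvec)$ beyond the harmonic map equation for $\nvec$. A second, smaller gap: even once $\nvec=\vec T\xvec/|\xvec|$ is established, radial symmetry of $s$ does not follow from the uniqueness of the monotone ODE solution $h$; one must show that every solution of the resulting scalar PDE with $s(\vec 0)=0$, $s\ge 0$ and the prescribed behaviour at infinity is radial, which in \cite{hm} is a separate comparison-principle argument (and likewise the absence of zeros of $s$ away from the origin, which you mention but do not prove, is needed before the global lifting of $\nvec$ is legitimate). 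As written, the proposal assembles the right ingredients but does not actually establish the two rigidity statements on which the conclusion rests.
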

This yields the conclusion of Proposition~\ref{prop:radial2}.

\end{proof}

\section{Conclusions}
\label{sec:4}

{Theorem~\ref{th:biaxial2} focuses on global minimizers of the LdG energy on arbitrary 3D domains, with arbitrary topologically non-trivial Dirichlet conditions, for low temperatures. We prove that global minimizers are ``almost" uniaxial everywhere away from the singular set of a (minimizing) limiting harmonic map, $\Qvec^0$. Further, we prove that global minimizers have at least a point of maximal biaxiality (with $\beta^2=1$)  and a point of pure uniaxiality (with $\beta^2=0$) near each singular point of $\Qvec^0$ and their norm converges uniformly to unity everywhere, for low temperatures. This yields quantitative information about the expected number and location of points of maximal biaxiality and provides rigorous justification for the widely used Lyuksyutov constraint for the LdG energy in the low-temperature limit,  suggesting that we may be able to analytically recover the celebrated biaxial torus solution  \cite{gartland, kraljvirga, sluckin, penzenstadler} by a blow-up analysis of the LdG energy using scalings related to the decay estimate of strongly biaxial regions derived in Theorem~\ref{th:biaxial2}. Recall that the biaxial torus solution (numerically) exhibits a ring of maximal biaxiality and defect cores with uniaxial states that have negative order parameter and from the results in Theorem~\ref{th:biaxial2}, we conjecture that we may find a biaxial torus solution near each singular point of a (minimizing) limiting harmonic map.}

Theorem~\ref{th:radial} focuses on global LdG minimizers within the restricted class of uniaxial $\Qvec$-tensors for low temperatures. These constrained uniaxial minimizers exist although they need not be critical points of the LdG energy. Indeed, uniaxial critical points of (\ref{eq:ELeqs}) are, in general, difficult to find. In \cite{lamyuniaxial}, the author excludes purely uniaxial critical points of the LdG energy in 1D and 2D but the radial-hedgehog (RH) solution is a 3D uniaxial critical point of the LdG energy i.e.\ is a solution of the system (\ref{eq:ELeqs}) of the form (\ref{eq:uniaxial}) with $s>0$ for $r>0$. Indeed, one could imagine a continuous uniaxial perturbation of the RH solution that remains a solution of the system (\ref{eq:ELeqs}). An alternative scenario is that we glue together several copies of the RH solution, with a weak uniaxial perturbation of a limiting harmonic map interpolating between the distinct RH-copies, to yield an uniaxial solution of (\ref{eq:ELeqs}), at least in some approximate sense. Proposition~\ref{prop:radial2} (of which constrained uniaxial minimizers are a special case) has a two-fold purpose: (i) firstly, it rules out the stability of such uniaxial critical points, if they can be constructed and (ii) secondly and perhaps more importantly, it establishes the universal RH-type defect profiles for uniaxial critical points (if they exist) of the LdG energy for low temperatures.

We conjecture that the universal RH-type defect profile is generic for sequences of uniaxial critical points that converge strongly to a (minimizing) limiting harmonic map, under some physically relevant hypotheses which maybe different in different asymptotic limits. For example, we can consider a sequence of physically relevant uniaxial critical points in the vanishing elastic constant limit $L \to 0$.
Here, physical relevance can again be understood in terms of non-negative scalar order parameter and an appropriate energy bound. The $L \to 0$ limit has been well-studied in \cite{amaz} and we can appeal to Lemmas $6-7$ of \cite{amaz} or Case I of Theorem~\ref{th:biaxial2}[(i)] to deduce that the uniaxial sequence converges uniformly to a limiting (minimizing) harmonic map, $\Qvec^0$,  away from the singular set of $\Qvec^0$. We conjecture that one can repeat the arguments in Proposition~\ref{prop:radial2} to deduce (i) the existence of an isotropic point near each singular point, $\xvec_i$, of $\Qvec^0$ and (ii) the local RH-type defect profile near each such isotropic point.
We hope to make rigorous studies of uniaxial and biaxial defect profiles, in different temperature regimes, in future work. 


\subsection*{Acknowledgements}
    AM is supported by an EPSRC Career Acceleration Fellowship EP/J001686/1 and EP/J001686/2, an Oxford Centre for Industrial Applied Mathematics (OCIAM) Visiting Fellowship and association with the Advanced Studies Centre, Keble College, Oxford.
    DH is supported by FONDECYT project n$^\circ$ 1150038 from the Chilean Ministry of Education.
    AP thanks OCCAM for supporting his collaborative visit in March 2012.
    DH thanks OxPDE, University of Oxford, for financial support in October 2012.
    DH, AM thank the Mathematics of Liquid Crystals Research Programme for supporting their stay at the Isaac Newton Institute, University of Cambridge in April 2013.
    We thank Giacomo Canevari and Arghir Zarnescu for helpful discussions.

\bibliography{biaxial2} \bibliographystyle{plain}   

\begin{thebibliography}{10}

\bibitem{AlmgrenLieb}
F.~J. Almgren and E.~Lieb.
\newblock Singularities of energy minimizing maps from the ball to the sphere:
  examples, counterexamples, and bounds.
\newblock {\em Ann. of Math.}, 128:483 -- 530, 1988.

\bibitem{pisante}
A.Pisante.
\newblock Two results on the equivariant {Ginzburg-Landau} vortex in arbitrary
  dimension.
\newblock {\em J. Funct. Anal.}, 260:892 -- 905, 2011.

\bibitem{bz}
J.M. Ball and A.~Zarnescu.
\newblock Orientability and energy minimization for liquid crystals.
\newblock {\em Arch.\ Rational Mech.\ Anal.}, 202(2):493--535, 2011.

\bibitem{bbh}
F.~Bethuel, H.~Brezis, and F.~H\'elein.
\newblock {Asymptotics for the minimizers of a Ginzburg-Landau functional}.
\newblock {\em Calc. Var. and Partial Differential Equations}, 1:123--148,
  1993.

\bibitem{bcl}
H.~Brezis, J.-M. Coron, and E.~Lieb.
\newblock Harmonic maps with defects.
\newblock {\em Comm. Math. Phys.}, 107:649--705, 1986.

\bibitem{degree}
Haim Brezis.
\newblock {\em Degree theory: old and new. Topological nonlinear analysis, II}.
\newblock Birkhäuser Boston, Boston, MA, 1997.

\bibitem{canevari}
G.~Canevari.
\newblock {Biaxiality in the asymptotic analysis of a 2D Landau-de Gennes model
  for liquid crystals}.
\newblock {\em ESAIM: Control, Optimisation and Calculus of Variations},
  21:101--137, 2015.

\bibitem{chapman}
S.~J. Chapman, S.~D. Howison, and J.~R. Ockendon.
\newblock Macroscopic models for superconductivity.
\newblock {\em SIAM Rev.}, 34:529--560, 1992.

\bibitem{contreraslamy}
Andres Contreras and Xavier Lamy.
\newblock Biaxial escape in nematics at low temperature.
\newblock {\em http://arxiv.org/abs/1405.2055}, 2014.

\bibitem{dg}
P.~G. {de Gennes}.
\newblock {\em The physics of liquid crystals}.
\newblock Clarendon, Oxford, 1974.

\bibitem{evans}
L.~Evans.
\newblock {\em Partial Differential Equations}.
\newblock American Mathematical Society, Providence, 1998.

\bibitem{vitelli}
A.~Fernandez-Nieves, V.~Vitelli, A.~S. Utada, D.~R. Link, M.~Marquez, D.~R.
  Nelson, and D.~A. Weitz.
\newblock Novel defect structures in nematic liquid crystal shells.
\newblock {\em Phys. Rev. Lett.}, 99:157801, 2007.

\bibitem{gartland}
E.~C. Gartland and S.Mkaddem.
\newblock On the local instability of radial hedgehog configurations in nematic
  liquid crystals under {L}andau-de {G}ennes free-energy models.
\newblock {\em Phys. Rev. E.}, 59:563--567, 1999.

\bibitem{hm}
D.~Henao and A.~Majumdar.
\newblock Symmetry of uniaxial global {L}andau-de {G}ennes minimizers in the
  theory of nematic liquid crystals.
\newblock {\em SIAM J. Math. Anal.}, 44:3217--3241, 2012.

\bibitem{lavrentovich}
M.~Kleman and O.~D. Lavrentovich.
\newblock {\em Soft Matter Physics: An Introduction}.
\newblock Springer, New York, 2003.

\bibitem{kraljvirga}
S.~Kralj and E.~Virga.
\newblock Universal fine structure of nematic hedgehogs.
\newblock {\em J. Phys. A}, 34:829--838, 2001.

\bibitem{lamy}
Xavier Lamy.
\newblock Some properties of the nematic radial hedgehog in the {Landau-de
  Gennes theory}.
\newblock {\em Journal of Mathematical Analysis and Applications},
  397:586--594, 2013.

\bibitem{lamyuniaxial}
Xavier Lamy.
\newblock Uniaxial symmetry in nematic liquid crystals.
\newblock {\em Annales de l'Institut Henri Poincare (C) Non Linear Analysis},
  In press, 2014.

\bibitem{lin}
F.~H. Lin and C.~Liu.
\newblock Static and dynamic theories of liquid crystals.
\newblock {\em J. Partial Differential Equations}, 14:289--330, 2001.

\bibitem{ejam}
A.~Majumdar.
\newblock Equilibrium order parameters of liquid crystals in the {L}andau-de
  {G}ennes theory.
\newblock {\em Eur. J. Appl. Math.}, 21:181--203, 2010.

\bibitem{am2}
A.~Majumdar.
\newblock The radial-hedgehog solution in the {L}andau-de {G}ennes theory for
  nematic liquid crystals.
\newblock {\em Eur. J. Appl. Math.}, 23:61--97, 2012.

\bibitem{amaz}
A.~Majumdar and A.~Zarnescu.
\newblock The {L}andau-de {G}ennes theory of nematic liquid crystals: the
  {O}seen-{F}rank limit and beyond.
\newblock {\em Arch. Ration. Mech. and Anal.}, 196:227--280, 2010.

\bibitem{MillotPisante}
V.~Millot and A.~Pisante.
\newblock Symmetry of local minimizers for the {Ginzburg-Landau functional in
  3D}.
\newblock {\em J. Eur. Math. Soc.}, 12:1069--1096, 2010.

\bibitem{mironescu}
P.~Mironescu.
\newblock Les minimiseurs locaux pour l'\'equation de {G}inzburg-{L}andau sont
  \`a sym\'etrie radiale.
\newblock {\em C. R. Acad. Sci. I - Math.}, 323:593--598, 1996.

\bibitem{g2}
S.~Mkaddem and E.~C. Gartland.
\newblock Fine structure of defects in radial nematic droplets.
\newblock {\em Phys. Rev. E}, 62:6694--6705, 2000.

\bibitem{newtonmottram}
N.J.Mottram and C.Newton.
\newblock Introduction to \textbf{Q}-tensor theory.
\newblock Research Report~10, University of Strathclyde, Department of
  Mathematics, 2004.

\bibitem{PacardRiviere}
F.~Pacard and T.~Rivi\`ere.
\newblock {\em Linear and nonlinear aspects of vortices. The Ginzburg-Landau
  model}.
\newblock Birkh{\"a}user, Boston, 2000.

\bibitem{PanatiPisante}
G.~Panati and A.Pisante.
\newblock Bloch bundles, {Marzari-Vanderbilt functional and maximally localized
  Wannier functions}.
\newblock {\em Comm. Math. Phys.}, 322:835--875, 2013.

\bibitem{zbd}
L.~A. Parry-Jones and S.~J. Elston.
\newblock Flexoelectric switching in a zenithally bistable nematic device.
\newblock {\em J. Appl. Phys.}, 97:093515, 2005.

\bibitem{penzenstadler}
E.~Penzenstadler and H.-R.Trebin.
\newblock Fine structure of point defects and soliton decay in nematic liquid
  crystals.
\newblock {\em J. Phys. France}, 50:1027--1040, 1989.

\bibitem{schoen}
R.~Schoen and K.~Uhlenbeck.
\newblock A regularity theory for harmonic maps.
\newblock {\em J. Diff. Geom.}, 17:307--335, 1982.

\bibitem{schoen3}
R.~Schoen and K.~Uhlenbeck.
\newblock Boundary regularity and the {D}irichlet problem for harmonic maps.
\newblock {\em J. Diff. Geom.}, 18:253--268, 1983.

\bibitem{schoen2}
R.~Schoen and K.~Uhlenbeck.
\newblock Regularity of minimizing harmonic maps into the sphere.
\newblock {\em Invent. Math.}, 78:89--100, 1984.

\bibitem{sluckin}
N.~Schopohl and T.~J. Sluckin.
\newblock Defect core structure in nematic liquid crystals.
\newblock {\em Phys. Rev. Lett.}, 59:2582--2585, 1987.

\bibitem{virga}
E.~G. Virga.
\newblock {\em Variational theories for liquid crystals}.
\newblock Chapman and Hall, London, 1994.

\end{thebibliography}
\end{document}